\theoremstyle{definition}
\newtheorem{theorem}{Theorem}
\newtheorem*{theorem-non}{Theorem}
\newtheorem{definition}{Definition}
\newtheorem{example}{Example}
\newtheorem{lemma}{Lemma}
\newtheorem{proposition}{Proposition}
\newtheorem{assumption}{Assumption}
\newtheorem{remark}{Remark}
\newtheorem{corollary}{Corollary}
\def\R{\mathbb R}
\def\G{\mathcal G}
\def\N{\mathbb N}
\def\E{\mathbb E}
\def\allone{\mathbf 1}
\def\pstar{\mathcal P^*}
\definecolor{blue}{RGB}{0,0,0}
\begin{document}

\title{A Perron-Frobenius Theorem for Strongly Aperiodic Stochastic Chains}

\author{Rohit Parasnis}
\author{Massimo Franceschetti}
\author{Behrouz Touri\thanks{Email: rohit100@mit.edu, massimo@ece.ucsd.edu, btouri@ucsd.edu\\ We thank Adel Aghajan for insightful discussions related to this work.}} 
\small\affil{Department of Electrical and Computer Engineering\\ University of California San Diego}\normalsize

\date{}
\maketitle
\sloppy

\begin{abstract}
    We derive a generalization of the Perron-Frobenius theorem to time-varying row-stochastic matrices as follows: using Kolmogorov’s concept of absolute probability sequences, which are time-varying analogs of principal eigenvectors, we identify a set of connectivity conditions that generalize the notion of irreducibility (strong
connectivity) to time-varying matrices (networks), and we
show that under these conditions, the absolute probability
sequence associated with a given matrix sequence is (a)
uniformly positive and (b) unique. Our results apply to both
discrete-time and continuous-time settings. We then discuss a few applications of our main results to non-Bayesian
learning, distributed optimization, opinion dynamics, and
averaging dynamics over random networks.
\end{abstract}

\section{Introduction}
The Perron-Frobenius theorem is a foundational tool in linear algebra that is central to the theory of Markov chains, and has many applications in database systems,
complex networks, population dynamics, opinion dynamics, social learning, economic growth and income inequalities, and many other physical, social, and economic phenomena~\cite{ballester2006s,proskurnikov2017tutorial,mui2002computational,ghiglino2010keeping, brin1998anatomy,golub2006arnoldi, newman2003structure,cull1973mathematical}.
Its strength lies in connecting the limiting behavior of $A^k$ as $k\to\infty$ with the structural (graph-theoretic) pattern of a fixed non-negative matrix $A$. For example, in the case of Google's PageRank algorithm, $A$ denotes the transition matrix of a Markov chain modeling a web-surfer, and the theory relates the ergodic (long term) behavior of this Markov chain to the centrality of webpages on World Wide Web (WWW).

{Unsurprisingly, there exists a large body of works that generalize the Perron-Frobenius theorem in a multitude of directions. Examples include~\cite{friedland2013perron},~\cite{aeyels2002extension},~\cite{sine1990nonlinear} and~\cite{pollicott1984complex}, which extend the classical theorem to polynomial maps with non-negative coefficients, nonlinear homogeneous systems, non-expansive maps, and complex
Perron-Frobenius type operators, respectively. A comprehensive treatment of  nonlinear extensions of the classical result can be found in~\cite{lemmens2012nonlinear}. Besides these extensions, the Perron-Frobenius theorem and its extension by Krein and Rutman~\cite{krein1948linear} to infinite-dimensional systems also find extensive application in the theory of monotone dynamical systems, which was pioneered in~\cite{hirsch1988stability} and treated extensively in the textbook~\cite{smith1995monotone}.} 

{The aforementioned tools and techniques have been applied to both static and time-varying dynamical systems. To add to this rich literature, therefore, we consider 
time-varying \textit{networked} dynamical systems 
and focus on the structures and patterns inherent in the {sequences} of network topologies that govern the dynamical behavior of such systems. Examples of such dynamics  include learning over time-varying social networks~\cite{golub2017learning}, distributed optimization and estimation over time-varying multi-agent networks~\cite{borkar1982asymptotic,nedic2014distributed},  distributed motion planning in robotic networks~\cite{bullo2009distributed}, etc. Although some of these are examples from distributed control where the relationships between distributed dynamics evolving over sequences of graphs and the connectivity conditions imposed on  the sequences  are  well-characterized, these connectivity conditions (e.g., $B$-connectivity/uniform strong connectivity~\cite{nedic2014distributed}) typically embody persistent or periodic connectivity and are not known to be \textit{necessary} (i.e., they are only known to be \textit{sufficient}) for the desired convergence properties of the concerned distributed algorithms.}

 {This paper is  a step towards filling these gaps in the literature. We focus on time-varying networks described by sequences of \textit{row-stochastic} matrices, which are central to numerous well-known applications of the Perron-Frobenius theorem (such as in the analysis of time-homogeneous Markov chains). 
 We extend two assertions of the classical theorem to a broad class of stochastic matrix sequences called strongly aperiodic stochastic chains. Our extensions result in (a) a time-varying analog of strong connectivity that is more general than standard connectivity notions for time-varying networks, 
 and (b) weak connectivity conditions that are sufficient to guarantee convergence to a steady state for distributed dynamics evolving over time-varying networks described by strongly aperiodic stochastic chains. Our contributions are as follows:\begin{enumerate} 
     \item We introduce \textit{approximate reciprocity}, a weak reciprocal connectivity condition that enables us to extend the concept of matrix {irreducibility} (which implies strong connectivity for static networks) to irreducibility for stochastic chains, which has the interpretation of strong connectivity \textit{over time} for time-varying networks. We show that our extension is more general than well-studied connectivity conditions such as B-connectivity and cut-balance~\cite{touri2013product} or instantaneous reciprocity.
     \item We find tight necessary and sufficient conditions for a strongly aperiodic stochastic chain to possess an \textit{absolute probability sequence } (Kolmogorov's time-varying analog of the Perron left eigenvector~\cite{kolmogoroff1936theorie}) that is unique and uniformly positive. These results (Theorems~\ref{thm:main} and~\ref{thm:uniqueness}) generalize two assertions of the classical theorem to time-varying networks described by strongly aperiodic chains.
     \item We then consider the continuous-time setting, where discrete sequences of row-stochastic matrices are replaced by continua of the Laplacian matrices of network digraphs. We provide an analog of approximate reciprocity for this setting and consequently derive the continuous-time analogs (Theorems~\ref{thm:ct_main} and~\ref{thm:ct_uniqueness}) of Theorems~\ref{thm:main} and~\ref{thm:uniqueness}. 
     \item We discuss a few applications of our main results that lead to novel insights into independent  random chains, opinion dynamics, and certain distributed algorithms.
 \end{enumerate}}

  The paper is organized as follows. We {introduce the technical background in Section~\ref{sec:prelim}},  formulate our problem and derive our main results in Section~\ref{sec:results}, explore some applications of our main results in Section~\ref{sec:applications}, and end with a few concluding remarks in Section~\ref{sec:conclusion}.
  
  \textit{Related Works:} Our work makes use of results from the theory of stochastic matrix sequences in relation to absolute probability sequences~{\cite{kolmogoroff1936theorie, chatterjee1977towards,touri2012product,touri2013product, bolouki2015eminence, bolouki2015consensus}} and the theory of non-homogeneous Markov chains~\cite{kolmogoroff1936theorie,chatterjee1977towards}. {The pioneering work~\cite{kolmogoroff1936theorie} introduced the concept of absolute probability sequences, showed that there exists an absolute probability sequence for every stochastic matrix sequence, and proved that the absolute probability sequence is unique if and only if the matrix sequence is \textit{ergodic}~\cite{chatterjee1977towards}, where ergodicity is a property studied in detail later in~\cite{chatterjee1977towards}. The results of~\cite{kolmogoroff1936theorie} and~\cite{chatterjee1977towards} are central to this paper, as we characterize the uniqueness of absolute probability sequences by connecting the concept of ergodicity with \textit{approximate reciprocity}, a concept we introduce in Section~\ref{sec:results}. 
  By introducing the infinite flow theory, the recent works~\cite{touri2012product,touri2013product}  extend the theory of stochastic matrix products by exploring the relationship between the asymptotic behavior of such products and the properties of network connections/influences evolving over time. These works also extend this framework to random stochastic chains and propose the following concepts that are related to the theoretical development of our main results: Class $\pstar$, which is the class of stochastic chains that admit a uniformly positive absolute probability sequence, infinite flow graphs, and  instantaneous reciprocity. In particular,~\cite{touri2013product} and \cite{touri2012product} introduce Class $\pstar$ and  show that many well-studied chains such as doubly stochastic chains and $B$-connected chains belong to this class. Then, they show that a condition, which they refer to as the \textit{infinite flow property}, is necessary and sufficient for the ergodicity of Class-$\pstar$-chains that satisfy a mild additional condition that resembles aperiodicity for time-homogeneous Markov chains. Finally, to close the loop, they show that instantaneous reciprocity is \textit{sufficient} for such chains to belong to Class $\pstar$. The current work extends these results as follows:}
  {\begin{enumerate} [leftmargin=0cm,itemindent=.5cm,labelwidth=\itemindent,labelsep=0cm,align=left]
      \item We significantly weaken the condition of instantaneous reciprocity, which requires the time-varying network to exhibit reciprocal connectivity/influence \textit{at every time instant}, to our condition of approximate reciprocity, which requires the network to exhibit a certain form of reciprocal connectivity \textit{over time}. Hence, our results apply to a much broader class of stochastic chains (see Remarks~\ref{rem:not_strong} and~\ref{rem:irreducible} and Examples~\ref{eg:one} and~\ref{eg:second} for more details).
      \item We show that  approximate reciprocity is not only sufficient, but also \textit{necessary} for a stochastic chain to belong to Class $\pstar$ (Theorems~\ref{thm:main} and~\ref{thm:ct_main}).
      
      Moreover, unlike~\cite{touri2012product} and~\cite{touri2013product}, we also derive a set of tight necessary and sufficient conditions for the uniqueness of absolute probability sequences (Theorems~\ref{thm:uniqueness} and~\ref{thm:ct_uniqueness}). These conditions are tight under approximate reciprocity (defined in Section~\ref{sec:results}) and a mild generalization of aperiodicity for stochastic chains.
      \item We also derive continuous-time analogs (Theorems~\ref{thm:ct_main} and~\ref{thm:ct_uniqueness}) of our main results, whereas the relevant results of~\cite{touri2012product} and~\cite{touri2013product} were developed only for the discrete-time setting.
  \end{enumerate}}
\noindent{Another related work is the work in~\cite{bolouki2015eminence}, which focuses primarily on the role of influential agent groups called \'{E}minence Grise Coalitions in driving continuous-time opinion dynamics to desired consensus states and provides a set of necessary and sufficient conditions for the existence of uniformly positive absolute probability sequences (i.e., chains belonging to Class $\pstar$). These conditions, though useful, do not lend themselves to simple interpretation. However, we use them as one of the many ingredients in our proofs of Theorems~\ref{thm:main} -~\ref{thm:ct_uniqueness}, which unravel the temporal network connectivity criteria that are equivalent to the abstract conditions in~\cite{bolouki2015eminence}. 

Related to both \cite{bolouki2015eminence} and our present work is~\cite{bolouki2015consensus}, which introduces a temporal connectivity condition called the \textit{infinite jet-flow property} and shows that this condition is equivalent to ergodicity if and only if the absolute probability sequence is uniformly positive. Our present work not only complements~\cite{bolouki2015consensus} by deriving tight necessary and sufficient conditions for the existence of a uniformly positive absolute probability sequence (via Theorem~\ref{thm:main}), but also provides a simpler characterization of ergodicity via Theorem~\ref{thm:uniqueness} (which focuses on uniqueness of the absolute probability sequence, which is in turn equivalent to ergodicity~\cite{kolmogoroff1936theorie}). Moreover, we tie uniqueness and uniform positivity of the absolute probability sequence together into a generalized notion of irreducibility by identifying a \textit{single} class of stochastic chains whose absolute probability sequences are both unique and uniformly positive. }

{Our work is also related to many existing results that extend Perron-Frobenius theory to nonlinear and/or time-varying systems. Among them,~\cite{deplano2020nonlinear} studies the convergence properties of positive systems by using non-linear Perron-Frobenius theory. Note that~\cite{deplano2020nonlinear}   focuses on static rather than time-varying non-linear state evolution maps. 
Similarly,~\cite{aeyels2002extension} extends the Perron-Frobenius theorem to a class of static and non-linear continuous-time systems that are positive and homogeneous. Another work that focuses on the continuous-time setting is~\cite{sanchez2009cones}, which uses certain extensions of the Krein-Rutman theorem~\cite{fusco1991perron,krasnoselskii1989positive} to study systems that can be considered monotone in a novel sense with respect to cones of rank {$k$ for a natural number $k$}. 

\textbf{{Terms and }Notation:} Let $\N$ denote the set of natural numbers, let $\N_0:=\N\cup\{0\}${, and for a given $n\in\N$, let $[n]:=\{1,2,\ldots,n\}$}. Let $\R$ denote the set of real numbers, $\R^n$ denote the set of $n$-dimensional real-valued column vectors, and let $\R^{n\times n}$ denote the set of $n\times n$ square matrices with real entries. For a  matrix $A\in\R^{n\times n}$, we let $a_{ij}=(A)_{ij}$ denote the entry in the $i$-th row and the $j$-th column of $A$. 

Let {$I_n$ (respectively, $O_n$)} denote the identity matrix {(respectively, the all-zeros matrix) in $\R^{n\times n}$, let $O_{m\times n}$ denote the all-zeros matrix in $\R^{m\times n}$}, let {$\mathbf 0_n\in\R^n$}  {(respectively, $\mathbf 1_n\in\R^n$)} denote the {$n$-dimensional} vector with all entries equal to zero {(one, respectively)}{, and let $e_n\in\R^n$ denote the $n$-th canonical basis vector, i.e., the vector with $1$ in its $n$-th entry and zeros in all other entries.}

{We assume that all matrix and vector inequalities hold entry-wise, e.g., $A\geq B$ means each entry of $A$ is no less than the corresponding entry of a matrix $B$ (of compatible dimension).} A vector $v\in\R^n$ is said to be \textit{stochastic} if $v$ is non-negative and $v^T\allone_n=1$. {A non-negative matrix $A\in \R^{n\times n}$ is said to be \textit{row-stochastic} (or simply \textit{stochastic}) if $A\allone_n=\allone_n$. In addition, $A$ is said to be \textit{substochastic} if $A\allone_n \le \allone_n$ entry-wise}.

Throughout the paper, we use $k$ as a discrete-time index that takes values in $\N_0$ (as in $\{A(k)\}_{k=0}^\infty$), and we use $t$ as a continuous-time index that takes values in $[0,\infty)$ (as in $\{A(t)\}_{t\ge 0}$). Let $\{A(k)\}_{k=0}^\infty$ be a {discrete-time} stochastic chain (a {discrete} sequence of row-stochastic matrices in $\R^{n\times n}$). Then, for any two times $k_1,k_2\in\N_0$ with ${k_1<k_2}$, we use $
    {A(k_2:k_1):=A(k_2-1)A(k_2-2)\cdots A(k_1)}
$ to denote the backwards matrix product of $\{A(k)\}_{k=0}^\infty$ over the time interval $[k_1,k_2]$ with the convention $A(k :k):=I_n$ for all $k\in\N_0$.  {In addition, we say that  $\{A(k)\}_{k=0}^\infty$ is a \textit{static chain} if $A(k)=A_0$ for all $k\in\N_0$ for a constant row-stochastic matrix $A_0\in\R^{n\times n}$}.

{For a vector $v\in\R^n$  and a subset $S\subset [n]$, we let $v_S\in \R^{|S|} $ denote the restriction of $v$ to the index set $S$. Similarly, } for a matrix $A\in\R^{n\times n}$, let $A_S$ be the principal sub-matrix of $A$ corresponding to the rows and columns indexed by $S$. Let $\bar S:=[n]\setminus S$, and let $A_{S\bar S}$ denote the sub-matrix of $A$ corresponding to the rows indexed by $S$ and the columns indexed by $\bar S$. For a sequence of matrices $\{A(k)\}_{k=0}^\infty$ in $\R^{n\times n}$ and times $k_0,k_1\in\N_0$ satisfying $k_0\leq k_1$,  let $A_S(k_1:k_0):=(A(k_1:k_0))_S$ and $A_{S\bar S}(k_1:k_0):=(A(k_1:k_0))_{S\bar S}$. 

An unweighted undirected graph with vertex set $[n]$ and edge set $E$ is denoted by $G=([n],E)$. On the other hand, a weighted time-varying directed graph with vertex set $[n]$, edge set $E(k)\subset[n]\times [n]$, and edge weights $\{w_{ij}(k): (i,j)\in [n]\times [n]\}$ is denoted by $G(k)=([n],E(k),W(k))$, where $W(k)\in\R^{n\times n}$ with ${(W(k))_{ij}:=w_{ij}(k)}$, which denotes the weight of the edge $(i,j)\in [n]\times [n]$. We assume that $w_{ij}(k)\neq 0$ if and only if $(i,j)\in E(k)$, i.e., $E(k)=\{(i,j)\in [n]\times [n]: w_{ij}(k)\neq 0 \}$. Recall that $G(k)$ is said to be \textit{strongly connected} if, for any two nodes $i,j\in[n]$, there exists a directed path from $i$ to $j$ in $G(k)$.

For a weighted time-varying directed graph ${G(t)=([n],E(t),W(t))}$ in continuous time, we let $L(t)=(\ell_{ij}(t))$ denote the weighted \textit{Laplacian} matrix of $G(t)$, defined by 
\begin{align*}
    \ell_{ij}(t)=
    \begin{cases}
        -w_{ij}(t)& \text{for all $i\neq j$},\\
        \sum_{q\neq i} w_{iq}(t)&\text{for $i=j\in[n]$}
    \end{cases}.
\end{align*} In addition, for a given non-negative matrix $A$, we let ${\G(A) = ([n],\mathcal E(A), A)}$ %
denote the weighted directed graph whose weighted adjacency matrix is $A$, i.e., we let ${\mathcal E(A)=\{(i,j)\in [n]\times [n] : A_{ij}>0\}}$.

\section{Preliminaries}~\label{sec:prelim}
In this section, we review the eigenvector assertions of the classical Perron-Frobenius theorem. {Recall that a non-negative matrix ${A_0\in\R^{n\times n}}$ is irreducible if its associated digraph $\G(A_0)$ is strongly connected.

Next, let us define the concept of \textit{instantaneous reciprocity} or \textit{cut-balance} and the \textit{infinite flow graph} of a stochastic chain, which we reproduce from~\cite{touri2012product,touri2013product} below.

\begin{definition}  [\textbf{Instantaneous Reciprocity/Cut-balance}] \label{def:reciprocity} A stochastic chain $\{A(k)\}_{k=0}^\infty$ is said to be \textit{cut-balanced} or \textit{instantaneously reciprocal} if there exists a constant $\alpha\in (0,1)$ such that 
\begin{align}\label{eqn:instareciproc}
     \sum_{i\in S} \sum_{j\in \bar S} a_{ij}(k) \geq \alpha \sum_{i\in \bar S}\sum_{j\in S} a_{ij}(k)  
\end{align}
holds for all times $k\in\N_0$ and all subsets $S\subset[n]$ and their complements $\bar S:=[n]\setminus S$. In other words, $\allone_n^T A_{S\bar S}(k)\allone_n\geq \alpha \allone_n^T A_{\bar S S}(k)\allone_n$ for all $S\subset[n]$ and all $k\in\N_0$.
\end{definition}

Intuitively, a stochastic chain is said to be instantaneously reciprocal if the associated sequence of directed graphs is such that the net influence of any subset $S$ of individuals on the complementary subset $\bar S$ is comparable to the net reverse influence of $\bar S$ on $S$, i.e., the ratio of the forward and the backward influences does not vanish in time.

\begin{definition} [\textbf{Infinite Flow Graph}~\cite{touri2012product}]
For a stochastic chain $\{A(k)\}_{k=0}^\infty$, its \emph{infinite flow graph} is the graph $G^\infty= ([n], E^\infty)$ with 
$$
    {E}^{\infty}:=\left\{\{i, j\} \,\Big \lvert\, \sum_{k=0}^{\infty}\left(a_{i j}(k)+a_{j i}(k)\right)=\infty, i \neq j \in[m]\right\}.
$$
\end{definition}

Intuitively, there exists a link from a node $j\in[n]$ to another node $i\in [n]\setminus\{j\}$ in the infinite flow graph $G^\infty$ if and only if  {either of the two} node{s $i$ and $j$} exerts a long-term influence on {the other node} in the time-varying directed graph $G(k)$ (whose weighted adjacency matrix at time $k$ is $A(k)$).

\begin{remark} [\textbf{Eigenvector Assertions of the Perron-Frobenius Theorem for Stochastic Matrices}]  \label{rem:original} 
{It was shown in \cite[Lemma 5.7]{touri2012product}} that {the concepts of infinite flow graph and instantaneous reciprocity are related to matrix irreducibility as follows}: a stochastic matrix $A_0$ is irreducible if and only if the corresponding static chain $\{A(k)=A_0\}_{k=0}^\infty$ is instantaneously reciprocal and its infinite flow graph $G^\infty$ is connected. { Using this characterization of irreducibility for stochastic matrices,}
the two eigenvector assertions of the Perron-Frobenius theorem can be restated  as follows: {For a static chain $\{A(k)\}_{k=0}^\infty$ with $A(k)=A_0\in\R^{n\times n}$ for all $k\in\N_0$, if the chain is instantaneously reciprocal and if its infinite flow graph is connected, then $A_0$ has a stochastic principal left eigenvector $\pi_0\in\R^n$ that is (a) entry-wise positive, and (b) unique.} 
\end{remark}

Note that the original theorem applies to left eigenvectors as well as to right eigenvectors. However,  the positivity assertion is trivial for the \textit{right} eigenvectors of stochastic matrices,  as all such matrices admit the all-one vector as a right eigenvector. Nonetheless, for such matrices, the implication of the assertion for their principal left eigenvectors is non-trivial and interesting. We now present an object that extends the notion of principal left eigenvectors to the case of row-stochastic chains.

\begin{definition}  [\textbf{Absolute Probability Sequence}~\cite{kolmogoroff1936theorie}] \label{def:abs_prob_seq} Let $\{A(k)\}_{k=0}^\infty$ be  a stochastic chain (sequence of row-stochastic matrices).
A sequence of stochastic vectors $\{\pi(k)\}_{k=0}^\infty$ is said to be an absolute probability sequence for $\{A(k)\}_{k=0}^\infty$ if 
$
\pi^T(k+1)A(k)=\pi^T(k)\quad\text{for all } k\in\N_0.
$
\end{definition}
Note that every stochastic chain admits an absolute probability sequence \cite{kolmogoroff1936theorie}. 
Moreover, if $\{A(k)\}_{k=0}^\infty$ is a static chain with $A(k)=A_0\in\R^{n\times n}$ for all $k\in\N_0$, then the static sequence $\pi(k)=\pi_0$, where $\pi_0\in\R^n$ is a stochastic vector satisfying $\pi_0^TA_0=\pi_0^T$,  is an absolute probability sequence for $\{A(k)\}_{k=0}^\infty$. Hence, absolute probability sequences are a  time-varying extension of stochastic principal left eigenvectors.

This discussion naturally leads to the following question: can we generalize  the eigenvector assertions of the Perron-Frobenius theorem (see Remark~\ref{rem:original}) to any class of non-static stochastic chains  using the notion of absolute probability sequences? {We answer this question in the next section using the following concept, which extends the notion of
positive principal left eigenvectors to the time-varying case.} }

\begin{definition} [\textbf{Class $\mathbf{\mathcal{P}^*}$}~\cite{touri2012product}] \label{def:pstar} We let (Class-)$\mathcal P^*$ be the set of all stochastic chains that admit uniformly positive absolute probability sequences, i.e., a sequence of stochastic vectors $\{\pi(k)\}_{k=0}^\infty$ such that $\pi(k)\geq p^*\mathbf 1_n$ for some scalar $p^*>0$ and all $k\in\N_0$. (Note that the absolute probability sequence and the value of $p^*$ may vary from chain to chain). 
\end{definition}

{It was  shown in~\cite{touri2013product} that Class $\pstar$ subsumes a well-studied class of stochastic chains called $B$\textit{-connected} chains, which was originally studied in~\cite{tsitsiklis1984problems}. We define this concept below.}

{\begin{definition}[\textbf{$B$-Connectivity}~\cite{touri2012product,touri2013product}]\label{def:unif_strong_connect}  A  stochastic chain $\{A(k)\}_{k=0}^\infty$ is said to be $B$-connected if
\begin{enumerate}
    \item there exists a $\delta>0$ such that for all $i,j\in[n]$ and all $k\in\N_0$, either $a_{ij}(k)\geq \delta$ or $a_{ij}(k)=0$,
    \item \label{item:strong_feedback} $a_{ii}(k)>0$ for all $i\in[n]$ and all $k\in \N_0$, and
    \item \label{item:b_strong} there exists a constant $B\in\N$ such that for the sequence of directed graphs  $\{G(k)=([n], E(k))\}_{k=0}^\infty$, where $ E(k) (i,j)\in [n]^2: a_{ji}(k)>0\}$, the graph
    $
        \G(k):=\left([n], \bigcup_{q=kB}^{(k+1)B-1} E(q)\right )
    $
    is strongly connected for every $k\in \N_0$.
\end{enumerate}
\end{definition}}
Intuitively, a stochastic chain is $B$-connected if the associated sequence of digraphs exhibits periodic connectivity.}

To extend the second of the two assertions of the classical theorem that we stated in Remark~\ref{rem:original} (the unique eigenvector assertion of  Perron-Frobenius theorem), we will need the following definitions.

\begin{definition} [\textbf{Ergodicity for Stochastic Chains}~\cite{chatterjee1977towards}]\label{def:ergodicity} A stochastic chain $\{A(k)\}_{k=0}^\infty\in\R^{n\times n}$ is said to be \textit{ergodic} if, for every $k_0\in\N$, there exists a stochastic vector $\pi(k_0)\in \R^n$ such that 
$\lim_{k\to\infty}A(k:k_0)=\allone_n\pi^T(k_0)$.
\end{definition}

To interpret the above definition, we first observe that in the distributed averaging dynamics $x(k+1)=A(k)x(k)$ with a starting time $k_0\in\N_0$ and an initial condition $x(k_0)\in\R^n$, we have 
$
    x(k)=A(k:k_0)x(k_0) \quad\text{for all}\quad k\in\N_0.
$
For an ergodic chain, this means that $\lim_{k\rightarrow\infty}x(k)=\pi^T(k_0) x(k_0)\allone_n$, which is a \textit{consensus} vector (i.e., all its entries are equal). Therefore, a stochastic chain being ergodic means that it always enables consensus regardless of the starting time $k_0$ and the starting point $x(k_0)$.

\begin{definition} [\textbf{Infinite Flow Stability}\label{def:inf_flow_stab}~\cite{touri2012product}]
A stochastic chain $\{A(k)\}_{k=0}^\infty$ is said to be \textit{infinite flow stable} if
\begin{enumerate}
    \item The sequence $\{x(k)\}_{k=k_0}^\infty$, which evolves as ${x(k+1)=A(k)x(k)}$, converges to a limit for all starting times $k_0\in\N_0$ and all initial conditions $x(k_0)\in\R^n$.
    \item $\lim_{k\to\infty}(x_i(k) - x_j(k))=0$ for all $(i,j)\in E^\infty$, where $E^\infty$ is the edge set of the infinite flow graph of $\{A(k)\}_{k=0}^\infty$.
\end{enumerate}

\end{definition}

Put simply, a stochastic chain is infinite flow stable if (a) the states of all the nodes of the corresponding time-varying network converge to a limit asymptotically in time, and (b) if a consensus is necessarily reached among nodes that exert a long-term influence on each other.

{Finally, we define \textit{strong aperiodicity}, a mild generalization of aperiodicity for stochastic chains. We assume strong aperiodicity in all our main results.
\begin{definition} [\textbf{Strong Aperiodicity}~\cite{touri2013product}]
    A stochastic chain $\{A(k)\}_{k=0}^\infty$ is  strongly aperiodic if there exists a $\gamma>0$ such that $A(k)\ge \gamma I$ for all $k\in \N_0$.
\end{definition}}

\section{Main Results}\label{sec:results}

We first extend the assertions of the Perron-Frobenius theorem that we stated in Remark~\ref{rem:original} to discrete-time stochastic chains of the form $\{A(k):k\in\N_0\}$ and then to continuous-time stochastic chains of the form $\{A(t):t\geq 0\}$.

\subsection{Discrete Time}

Since the definition of Class $\pstar$ eludes simple interpretation, we would like to derive necessary and sufficient conditions for a given stochastic chain to belong to Class $\pstar$. To this end, we introduce the idea of \textit{approximate reciprocity}, which is a weaker notion of reciprocity (Definition~\ref{def:reciprocity}).

\begin{definition} [\textbf{Approximate Reciprocity}] \label{def:approx_reciprocity}
A stochastic chain $\{A(k)\}_{k=0}^\infty$ is said to be \emph{approximately reciprocal} if there exist constants $p_0,\beta\in (0,\infty)$ such that for all $S\subset [n]$ and all times $0\leq k_0<k_1$, the following inequality holds 
\begin{align}\label{eq:crucial}
    p_0\sum_{k=k_0}^{k_1-1}  \allone_{|S|} ^T A_{S\bar S}(k)\allone_{|\bar S|} \leq \sum_{k=k_0}^{k_1-1} \allone_{|\bar S|} ^T A_{\bar S S}(k)\allone_{|S|} +\beta.
\end{align}
\end{definition}

{Intuitively, a stochastic chain is said to be approximately reciprocal if the associated sequence of  digraphs is such that the net influence of any subset $S$ of individuals on the complementary subset $\bar S$ is, \textit{up to a slack parameter $\beta$}, comparable to the net reverse influence of $\bar S$ on $S$ \textit{over time}. Note that instantaneous reciprocity (Definition~\ref{def:reciprocity}) is a special case of approximate reciprocity in which $\beta=0$.}

{\begin{remark}\label{rem:not_strong}
Approximate reciprocity may appear to be a restrictive condition because it requires~\eqref{eq:crucial} to hold for \emph{all} times $k_0,k_1\in\N$ with $k_0<k_1$. On the contrary, as we argue below, this concept is general enough to apply to a large class of stochastic chains. 

Note that the slack parameter $\beta$ is an arbitrary positive constant. Therefore, whenever there exists a time-invariant upper bound on the difference between the total forward influence of $S$ on $\bar S$ and a non-vanishing fraction of the total reverse influence of $\bar S$ on $S$ over a finite time interval, approximate reciprocity holds irrespective of the value of the upper bound. This is clarified further by the examples below.
\end{remark}
\begin{example}\label{eg:one}
     Let $U,L\in\R^{n\times n}$ be defined as $U:=(1/2)\left(I_n + \allone e_n^T \right)$ and $L:=I_n - e_n (e_n - (1/n) \mathbf 1)^T$, so that $U$ (respectively, $L$) is upper-triangular (respectively, lower-triangular) and row-stochastic with positive diagonal entries. Then it can be verified that the stochastic chain  $\{A(k)\}_{k=0}^\infty$ defined by $A(2^\ell)= U$ and $A(2^\ell+1) = L$ for all $\ell\in\N_0$, and $A(k)=I_n$ for all $k\in\N_0\setminus\{2^0, 2^0+1, 2^1, 2^1+1,\ldots\}$ is approximately reciprocal {with $p_0=2/n$ and $\beta = {n}/{2}$} (see Definition~\ref{def:approx_reciprocity}).  However, the chain is not B-connected because the off-diagonal entries are all zero over  time intervals of exponentially increasing lengths, and it is neither instantaneously reciprocal as $U,L$ do not satisfy~\eqref{eqn:instareciproc}.
 \end{example}}

{The above example shows that not all approximately reciprocal chains are $B$-connected or instantaneously reciprocal. On the other hand, it can be verified that any $B$-connected chain is approximately reciprocal.}

{We now give another example to compare approximate reciprocity with instantaneous reciprocity.}

{\begin{example}\label{eg:second}
    Consider the dynamics of belief aggregation over a network of $n$ sensors. Suppose that every two sensors that can communicate with each other do so via a semi-duplex communication channel that enables asynchronous rather than simultaneous bidirectional communication. Suppose the sensors aggregate their neighbors' beliefs using weighted averaging, and let $a_{ij}(k)$ denote the weight assigned by sensor $j\in[n]$ to sensor ${i\in[n]}$ in the $k$-th aggregation round. In addition, suppose the aggregation weights $\{a_{ij}(k):i,j\in[n]\}$ are all bounded away from $0$ (i.e., there exists a constant $\delta>0$ such that $a_{ij}(k)\ge \delta$ whenever $a_{ij}(k)\ne 0$). As simultaneous bidirectional communication is not possible, we have $a_{ij}(k)=0$ whenever $a_{ji}(k)\ge \delta$.

    Suppose there exists a  $T\in\{2,3,\ldots\}$ such that for every $T$ transmissions from any sensor $i\in[n]$ to another sensor ${j\in[n]\setminus\{i\}}$ during any given time interval, there occurs at least one transmission from $j$ to $i$ during the same interval. In other words, the frequency of communication in any one direction is at least $(1/T)$-th of the frequency of communication in the reverse direction. Then, regardless of the value of $T$, it can be shown that $\{A(k)\}_{k=0}^\infty$ is approximately reciprocal with $p_0=\delta/T$ and $\beta = (1-1/T)\delta n^2${, i.e., for any set $S\subset [n]$ and any two times $k_0,k_1\in \N_0$ with $k_1> k_0$, we have 
    \begin{align}\label{eq:extra}
       \delta T^{-1} \sum_{k=k_0}^{k_1-1}  \allone_{|S|} ^T A_{S\bar S}(k)\allone_{|\bar S|} &\leq \sum_{k=k_0}^{k_1-1} \allone_{|\bar S|} ^T A_{\bar S S}(k)\allone_{|S|}\cr
       &\quad + (1-T^{-1})\delta n^2.
    \end{align}
    For the proof of~\eqref{eq:extra}, see Appendix~\ref{app:nine}.}
\end{example}}

 {The above example shows that  approximate reciprocity applies to scenarios in which a subset of agents $S\subset [n]$ exert a one-way influence on the complementary subset $\bar S:=[n]\setminus S$ over arbitrarily long intervals of time (i.e., for arbitrarily large values of $T$ in the context of Example~\ref{eg:second}), as long as the lengths of these intervals are bounded in time (so that $T<\infty$). 
 }

{Thus, the class of approximately reciprocal chains is significantly broader than that of instantaneously reciprocal chains.} 

We now show that approximate reciprocity is a necessary condition for a given stochastic chain to belong to Class $\pstar$.

\begin{proposition} [\textbf{Necessary Conditions for Class $\pstar$}] \label{prop:nec_cond}
Let $\{A(k)\}_{k=0}^\infty$ be a stochastic chain in $\R^{n\times n}$ that belongs to Class $\pstar$. Then, $\{A(k)\}_{k=0}^\infty$ is approximately reciprocal.
\end{proposition}

\begin{proof}
Consider any set $S\subset [n]$, and let $\bar S:=[n]\setminus S$. Then, there exists a permutation matrix $Q$ such that
$$
    Q^T A(k) Q = 
    \begin{bmatrix}
        A_S(k)&A_{S\bar{S}}(k)\\A_{\bar{S}S}(k)&A_{\bar{S}}(k)
    \end{bmatrix}
$$   
for all $k\in\N_0$.
Let $\{\pi(k)\}_{k=0}^\infty$ denote an absolute probability sequence for $\{A(k)\}_{k=0}^\infty$. Then one may verify that the corresponding absolute probability sequence for $\{Q^T A(k) Q\}_{k=0}^\infty$ is given by $\{ \tilde\pi(k)\}_{k=0}^\infty$, where $\tilde \pi(k):=[\pi_S(k)\,\,\pi_{\bar S}(k)]^T$
for all $k\in\N_0$. As a result, the following holds for all $k\in\N_0$:
\begin{gather*}
    \begin{bmatrix}
        \pi_S^T(k+1)\quad \pi_{\bar S}^T(k+1) 
    \end{bmatrix}
    \begin{bmatrix}
        A_S(k)&A_{S\bar{S}}(k)\\A_{\bar{S}S}(k)&A_{\bar{S}}(k)
    \end{bmatrix} \\= 
    \begin{bmatrix}
        \pi_S^T(k)\quad \pi^T_{\bar S}(k).
    \end{bmatrix}
\end{gather*}
The above equation is a pair of two vector equations, one of which is
$
    \pi_S^T(k+1) A_{S\bar S}(k) + \pi^T_{\bar S}(k+1) A_{\bar S}(k) = \pi^T_{\bar S}(k).
$
Multiplying each side of this equation by the all-ones vector yields
\begin{align}\label{eq:beh_first}
    \pi_S^T(k+1) A_{S\bar S}(k)\allone_{|\bar S|} + \pi^T_{\bar S}(k+1) A_{\bar S}(k)\allone_{|\bar S|}  = \pi^T_{\bar S}(k)\allone_{|\bar S|} .
\end{align}
On the other hand, the row-stochasticity of $A(k)$ implies that
\begin{align}\label{eq:beh_second}
    A_{\bar S}(k)\allone_{|\bar S|}  = \allone_{|\bar S|}  - A_{\bar S S}(k)\allone_{|\bar S|} .
\end{align}
Combining~\eqref{eq:beh_first} and~\eqref{eq:beh_second} gives us
\begin{align}\label{eq:beh_third}
    &\pi_S^T(k+1) A_{S\bar S}(k)\allone_{|\bar S|}  + \pi^T_{\bar S}(k+1) (\allone_{|\bar S|}  - A_{ \bar S S }(k)\allone_{| S|}  )\nonumber\\
    &= \pi^T_{\bar S}(k)\allone_{|\bar S|}.
\end{align}
{On subtracting $\pi^T_{\bar S}(k+1) (\allone_{|\bar S|}  - A_{ \bar S S }(k)\allone_{| S|}  )$ from both sides, we obtain}
\begin{align*}
    \pi_{S}^T(k+1)A_{S\bar{S}}(k)\allone_{|\bar S|}  &=\left(\pi_{\bar{S}}^T(k)-\pi_{\bar{S}}^T(k+1)\right)\allone_{|\bar S|} \cr
    &\quad+\pi_{\bar{S}}^T(k+1)A_{\bar{S}S}(k)\allone_{|\bar S|} 
\end{align*}
Since $\{A(k)\}_{k=0}^\infty\in\pstar$, there exists a $p^*>0$ such that ${\pi_{S}(k+1)\geq p^*\allone_{| S|}  }$. Therefore, 
\begin{align}\label{eq:beh_last}
    &p^*\allone_{| S|}  ^T A_{S\bar{S}}(k)\allone_{|\bar S|} \cr
    &\leq \left(\pi_{\bar{S}}^T(k)-\pi_{\bar{S}}^T(k+1)\right)\allone_{|\bar S|} +\pi_{\bar{S}}^T(k+1)A_{\bar{S}S}(k)\allone_{| S|} \cr
    &\leq \left(\pi_{\bar{S}}^T(k)-\pi_{\bar{S}}^T(k+1)\right)\allone_{|\bar S|} +\allone_n^TA_{\bar{S}S}(k)\allone_{|S|} .
\end{align}
Now, let $k_0,k_1\in\N$ be any two numbers such that $k_0<k_1$. Then, summing both the sides of~\eqref{eq:beh_last} over the range ${k\in \{k_0,k_0+1,\ldots, k_1-1\}}$ yields
\begin{align*}
    p^* \sum_{k=k_0}^{k_1-1} \allone_{| S|} ^T A_{S\bar{S}}(k)\allone_{|\bar S|} 
    &\leq \left(\pi_{\bar{S}}^T(k_0)-\pi_{\bar{S}}^T(k_1)\right)\allone_{|\bar S|} \cr
    &\quad+ \sum_{k=k_0}^{k_1-1} \allone_{| S|} ^TA_{\bar{S}S}(k)\allone_{|\bar S|} ,
\end{align*}
where we have used a telescoping sum on the right hand side. Since 
$$
     \left(\pi_{\bar{S}}^T(k_0)-\pi_{\bar{S}}^T(k_1)\right)\allone_{|\bar S|} \leq  \pi_{\bar{S}}^T(k_0)\allone_{|\bar S|}\leq \pi^T(k_0) \allone_{|\bar S|}  = 1,
$$
the above implies that $p^* \sum_{k=k_0}^{k_1-1} \allone_{| S|}^T A_{S\bar{S}}(k)\allone_{|\bar S|}$ is no greater than $1 +  \sum_{k=k_0}^{k_1-1} \allone_{|\bar S|}^T A_{\bar{S}S}(k)\allone_{| S|}$. We have thus proved~\eqref{eq:crucial} for $\beta=1$. This completes the proof.
\end{proof}

Interestingly, as we will show, approximate reciprocity is also a sufficient condition for  strongly aperiodic  chains to belong to Class $\pstar$. 

To connect approximate reciprocity, a property expressed in terms of \textit{sums} of matrix entries, to Class $\pstar$, a concept defined using \textit{products} of matrices, we need the following lemma that help relate matrix sums to matrix products.

\begin{lemma} \label{lem:uncertain}
Let $n,\sigma\in \N$ and $i,j\in [n]$ be given. Let $\{B(k)\}_{k=0}^{\sigma-1}$ be a sequence of substochastic matrices in $\R^{n\times n}$, and let $k_L:=\max\{k\in\{0,1,\ldots,\sigma-1\}:B_{ji}(k)>0\}$. Suppose there exist positive constants $\eta_i$ and $\eta_j$ such that
\begin{align*}
    B_{ii}(k_1:k_0)&\geq \eta_i \quad \text{if}\quad 0\leq k_0\leq k_1\leq  k_L,\cr
    B_{jj}(k_1:k_0)&\geq \eta_j \quad \text{if}\quad 0\leq k_0\leq k_1\leq  \sigma,\text{ and}\cr
    \sum_{k=0}^{\sigma-1}B_{ji}(k)&\geq \delta\quad \text{for some}\quad \delta\in (0, \eta_j).
\end{align*}
Then $B_{ji}(\sigma:0)\geq \frac{1}{2}\eta_i\eta_j \delta$.
\end{lemma}

{The proof of Lemma~\ref{lem:uncertain} is relegated to Appendix~\ref{app:A}.} In addition to the above lemmas, we need the notion of approximately stochastic chains, which we define below.

\begin{definition} [\textbf{Approximate Stochasticity}] Let $n\in \N$ and $m\in\N\cup\{\infty\}$ be given. A sequence $\{A(k)\}_{k=0}^m$ of $n\times n$ substochastic matrices is said to be \emph{approximately stochastic} if there exists a constant $\Delta<\infty$ such that 
\begin{align}\label{eq:approx_stoch}
    \sum_{k=0}^m \allone_n^T(\allone_n-A(k)\allone_n)\leq \Delta.
\end{align}
The constant $\Delta$ will be referred to as the \textit{deviation from stochasticity} of the sequence $\{A(k)\}_{k=0}^m$.
\end{definition}

We are now well-equipped to establish approximate reciprocity as a sufficient condition for strongly aperiodic chains to lie in $\pstar$. To do so, we use inductive arguments involving approximately stochastic chains to prove a slightly more general result that asserts that the backward matrix products of the concerned chains can be uniformly lower-bounded by a multiple of the identity matrix. {We prove this general result below after introducing the required notation.}

\def\A{\mathcal A}

{For each $n\in\N$, let $\A_n(\gamma,p_0,\beta,\Delta)$ denote the family of substochastic chains
$\{A(k)\}_{k=0}^\infty$ in $\R^{n\times n}$ that satisfy} 
\begin{enumerate}\item  \textbf{(Strong aperiodicity/Feedback property}: ${a_{ii}(k)\geq \gamma}$ for all $i\in [n]$ and all $k\in\N_0$ for $\gamma\in(0,1)$, 
\item \textbf{(Approximate reciprocity)}: \eqref{eq:crucial}  holds for every subset $S\subset [n]$ and $k_0,k_1\in \N_0$ satisfying $k_0< k_1$ for $p_0\in (0,1)$, $\beta\in (0,\infty)$, and
\item \textbf{(Approximate stochasticity)}: $\{A(k)\}_{k=0}^\infty$ satisfies~\eqref{eq:approx_stoch} with $\Delta\in [0,\infty)$ being is its deviation from stochasticity.
\end{enumerate}

\def\A{\mathcal A}

\begin{proposition}\label{prop:main}
There exists a {continuous} function
$
    \eta_n: (0,1)\times (0,1)\times (0,\infty)\times [0,\infty)\rightarrow (0,1)
$
{such that for any combination of parameters $n\in \N$, $\gamma\in (0,1)$, $p_0\in (0,1)$, $\beta\in (0,\infty)$, and ${\Delta\in [0,\infty)}$, every {sub}stochastic chain ${\{A(k)\}_{k=0}^\infty\in \A_n(\gamma,p_0,\beta,\Delta)}$ satisfies
$$
    A(k_1:k_0)\geq \eta_n(\gamma,p_0,\beta,\Delta)I_n
$$
for all $k_0,k_1\in\N_0$ with $k_0<k_1$.}
\end{proposition}

{The proof of Proposition~\ref{prop:main} is relegated to Appendix~\ref{app:B}.} 

We now obtain the desired sufficient conditions as a straightforward consequence of the above proposition.

\begin{theorem-non}  [\textbf{Sufficient Conditions for Class $\pstar$}] \label{cor:suff_cond}
Suppose $\{A(k)\}_{k=0}^\infty$ is a strongly aperiodic stochastic chain, i.e., suppose there exists a $\gamma>0$ such that $A(k)\geq \gamma I_n$ for all $k\in\N_0$. If $\{A(k)\}_{k=0}^\infty$ is approximately reciprocal, then $\{A(k)\}_{k=0}^\infty\in\pstar$. 
\end{theorem-non}

\begin{proof}
Since $\{A(k)\}_{k=0}^\infty$ is a stochastic chain, it satisfies approximate stochasticity (with the deviation from stochasticity being $\Delta=0$). 
Hence, if $\{A(k)\}_{k=0}^\infty$ satisfies~\eqref{eq:crucial} for all $S\subset [n]$ and all $k_0,k_1\in\N_0$ with $k_0<k_1$, then it follows from Proposition~\ref{prop:main} that there exists an $\eta>0$ satisfying $A(k_1:k_0)\geq \eta I$ for all $k_0,k_1\in\N_0$ with $k_0\leq k_1$. This means that $\allone_n^T A(k_1:k_0)\geq \eta \allone_n^T$ for all $k_1, k_0\in\N_0$. In light of Lemma 8 of \cite{bolouki2015eminence}, this means that $\{A(k)\}_{k=0}^\infty\in\pstar$.
\end{proof}

As a direct consequence of the above result and Proposition~\ref{prop:nec_cond}, we obtain the following necessary and sufficient conditions for Class $\pstar$: a strongly aperiodic stochastic chain belongs to $\pstar$ iff it is approximately reciprocal. Since a stochastic chain belongs to Class $\pstar$ iff it has a uniformly positive absolute probability sequence, we have the following result.

\begin{theorem} [\textbf{An Analog of the Positive-Eigenvector Assertion of the Perron-Frobenius Theorem}]\label{thm:main}
Suppose $\{A(k)\}_{k=0}^\infty$ is a strongly aperiodic stochastic chain. Then $\{A(k) \}_{k=0}^\infty$ has a uniformly positive absolute probability sequence if and only if it is approximately reciprocal.
\end{theorem}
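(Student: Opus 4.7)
The plan is to prove the two directions separately, with the necessity being a short flow-conservation calculation and the sufficiency requiring a more intricate coupling of strong aperiodicity with approximate reciprocity.

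\textbf{Necessity.} Assume $\{\pi(t)\}$ is a uniformly positive absolute probability sequence with $\pi(t)\geq p^*\mathbf{1}$. Fix any $S\subset[n]$ and restrict the identity $\pi^T(t+1) A(t) = \pi^T(t)$ to columns indexed by $\bar S$, obtaining
$$
\pi_S^T(t+1) A_{S\bar S}(t) + \pi_{\bar S}^T(t+1) A_{\bar S}(t) = \pi_{\bar S}^T(t).
$$
Multiplying by $\mathbf{1}$ on the right and substituting $A_{\bar S}(t)\mathbf{1} = \mathbf{1} - A_{\bar S S}(t)\mathbf{1}$ (from row-stochasticity) yields
$$
\pi_S^T(t+1) A_{S\bar S}(t)\mathbf{1} = (\pi_{\bar S}(t) - \pi_{\bar S}(t+1))^T\mathbf{1} + \pi_{\bar S}^T(t+1) A_{\bar S S}(t)\mathbf{1}.
$$
Applying $\pi(t+1) \geq p^*\mathbf{1}$ on the left-hand side and $\pi(t+1) \leq \mathbf{1}$ on the right-hand side, then summing over $t\in[t_0,t_1-1]$, the telescoping term $(\pi_{\bar S}(t_0)-\pi_{\bar S}(t_1))^T\mathbf{1}$ is at most $1$, which produces \eqref{eq:crucial} with $p_0 = p^*$ and $\beta = 1$.

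\textbf{Sufficiency.} Start from an arbitrary absolute probability sequence $\{\pi(t)\}$, whose existence follows from Kolmogorov's classical result. The task is to show $\inf_{t,i}\pi_i(t) > 0$. I would argue by contradiction: suppose this infimum is zero. The strong-aperiodicity inequality
$$
\pi_i(t) = \sum_j \pi_j(t+1) A_{ji}(t) \geq \gamma\,\pi_i(t+1)
$$
implies the forward propagation of smallness, $\pi_i(t_0+m) \leq \gamma^{-m}\pi_i(t_0)$. By finiteness of $[n]$, extract a subsequence $\{t_k\}$ and a fixed nonempty $S\subsetneq[n]$ with $\alpha_S(t_k):=\pi_S^T(t_k)\mathbf{1}\to 0$; forward propagation then gives $\alpha_S(t_k+m)\to 0$ uniformly over $m\in[0,M]$ for any fixed window length $M$. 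Apply the necessity calculation (with the roles of $S$ and $\bar S$ swapped) to derive the mass-balance identity
$$
\alpha_S(t_k+M) - \alpha_S(t_k) = \sum_{t=t_k}^{t_k+M-1}\!\!\bigl[\pi_S^T(t+1) A_{S\bar S}(t)\mathbf{1} - \pi_{\bar S}^T(t+1) A_{\bar S S}(t)\mathbf{1}\bigr].
$$
The left-hand side tends to $0$ and the first summand on the right is bounded by $M\max_m\alpha_S(t_k+m)\to 0$, forcing $\sum_t \pi_{\bar S}^T(t+1) A_{\bar S S}(t)\mathbf{1}\to 0$ on the window. By refining $S$ to be the maximal vanishing-mass set, one arranges that $\pi_{\bar S}(t+1)$ is uniformly bounded below on the window (the complement indices have $\liminf_k\pi_j(t_k)>0$, and their masses can be propagated by a further aperiodicity argument), so in fact $\sum_t\mathbf{1}^T A_{\bar S S}(t)\mathbf{1}\to 0$ on the window. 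Coupling this with approximate reciprocity applied to $\bar S$ forces $\sum_t\mathbf{1}^T A_{S\bar S}(t)\mathbf{1}\leq \beta/p_0$, which means $S$ is essentially disconnected from $\bar S$ on the window, contradicting approximate reciprocity applied in the other direction combined with the fact that $\pi_S$ must eventually receive mass from $\bar S$ to remain stochastic and consistent with $\pi^T(t+1)A(t)=\pi^T(t)$.

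\textbf{Main obstacle.} The sufficiency direction is the genuinely hard one: approximate reciprocity is a cumulative estimate on cross-flows while uniform positivity must hold pointwise in time. The coupling requires choosing the window length $M$ large enough that the additive slack $\beta$ in approximate reciprocity becomes negligible, yet small enough that the multiplicative loss $\gamma^{-M}$ in forward smallness propagation remains usable. In parallel, the set $S$ must be refined to pick up all indices whose masses simultaneously vanish along the subsequence, so that the complement really does carry uniformly positive mass. Balancing the constants $\gamma$, $p_0$, $\beta$, and $n$ in the window-length choice is the delicate part of the proof.
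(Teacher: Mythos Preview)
Your necessity argument is correct and is exactly the paper's proof of Proposition~\ref{prop:nec_cond}.

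The sufficiency argument, however, has a genuine logical gap at the very first step. You begin from an \emph{arbitrary} absolute probability sequence and aim to show it is uniformly positive. But approximate reciprocity does \emph{not} force every absolute probability sequence to be uniformly positive; it only guarantees the existence of one. Concretely, take $A(t)=I$ for all $t$: this chain is strongly aperiodic (with $\gamma=1$) and trivially approximately reciprocal (both sides of~\eqref{eq:crucial} vanish), yet $\pi(t)=e_1$ is a perfectly valid absolute probability sequence with $\inf_{t,i}\pi_i(t)=0$. So the contradiction you set up need never materialize: your final claim that ``$\pi_S$ must eventually receive mass from $\bar S$'' is simply false in general. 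The statement you are implicitly trying to prove (every absolute probability sequence is uniformly positive) is stronger than the theorem and is in fact false.

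Even setting this aside, the contradiction sketch is incomplete. The ``refining $S$ to be the maximal vanishing-mass set'' step presumes that the complement indices carry mass uniformly bounded below on the moving window, but vanishing along a subsequence is compatible with different indices vanishing at different times, and strong aperiodicity only propagates smallness \emph{forward} with exponential loss $\gamma^{-m}$; it gives no uniform lower bound on $\pi_{\bar S}$ over a window of growing length. The balancing act you identify in the ``Main obstacle'' paragraph is real, and the proposal does not resolve it.

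The paper's route is entirely different and bypasses these issues. Rather than analyzing any particular absolute probability sequence, it proves directly (Proposition~\ref{prop:main}) that the backward products satisfy $A(t_1:t_0)\geq \eta I$ uniformly in $t_0\leq t_1$, via an induction on the dimension $n$: one splits $[n]$ along a low-flow cut, applies the inductive hypothesis to the two approximately stochastic sub-chains, and then glues the blocks using Lemma~\ref{lem:uncertain} and the strong-connectivity of certain auxiliary graphs. From $A(t_1:t_0)\geq\eta I$, the existence of a uniformly positive absolute probability sequence follows from a compactness-type result (Lemma~8 of~\cite{bolouki2015eminence}). This constructive approach never needs to argue about a hypothetical non-positive sequence.
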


Observe how Theorem~\ref{thm:main} parallels the first of the two assertions of the classical theorem that we stated in Remark~\ref{rem:original}. This assertion states that for a static network that is reciprocal and whose infinite flow graph is connected (i.e., a network defined by an irreducible matrix), there exists a positive principal left eigenvector. Analogously, Theorem~\ref{thm:main} asserts that for a dynamic network that is approximately reciprocal, there exists a uniformly positive absolute probability sequence.

{We now extend the second of the two assertions of the classical theorem that we stated in Remark~\ref{rem:original}.}

We are now ready to {state and prove the next main result}.

\begin{theorem} [\textbf{An Analog of the Uniqueness Assertion of the Perron-Frobenius Theorem}] \label{thm:uniqueness} Let $\{A(k)\}_{k=0}^\infty$ be a strongly aperiodic stochastic chain that is also approximately reciprocal. Then, $\{A(k)\}_{k=0}^\infty$ admits a unique absolute probability sequence if and only if its infinite flow graph is connected.
\end{theorem}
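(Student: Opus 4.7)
The plan is to reduce uniqueness of the absolute probability sequence to an ergodicity statement about the backward products $A(t:s)$. By Theorem~\ref{thm:main}, the hypotheses already supply a uniformly positive absolute probability sequence $\pi$ with $\pi(t)\geq p^*\allone$, so both directions of the biconditional can be attacked by tracking how the cross-flow $\allone^T(A_{S\bar S}(t)+A_{\bar S S}(t))\allone$ between a subset $S\subset [n]$ and its complement $\bar S$ governs the evolution.

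For the ``if'' direction, I would show that connectedness of the infinite flow graph forces $A(t:s)\to\allone\pi^T(s)$ entry-wise for every fixed $s$. Fix an arbitrary $x\in\R^n$, let $y(t):=A(t:s)x$ so that $y(t+1)=A(t)y(t)$, and observe that $c:=\pi^T(t)y(t)$ is constant in $t$ by the absolute probability relation. Consider the $\pi$-weighted quadratic Lyapunov function
\begin{equation*}
V(t):=\sum_{i=1}^n \pi_i(t)(y_i(t)-c)^2.
\end{equation*}
A one-step expansion combined with $\sum_i \pi_i(t+1)a_{ij}(t)=\pi_j(t)$ yields the exact identity
\begin{equation*}
V(t)-V(t+1)=\tfrac12\sum_i \pi_i(t+1)\sum_{j,k}a_{ij}(t)a_{ik}(t)(y_j(t)-y_k(t))^2,
\end{equation*}
so $V$ is nonincreasing with finite total drop. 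Using $\pi_i(t+1)\geq p^*$ and $a_{ii}(t)\geq\gamma$ (taking $k=i$ in the double sum) yields $\sum_t a_{ij}(t)(y_i(t)-y_j(t))^2<\infty$ for every pair $i,j$. If $V(t)\not\to 0$, the spread of $y(t)$ stays bounded below in $\pi$-norm for all large $t$; then connectedness of the infinite flow graph lets me isolate a cut $(S,\bar S)$ across which $|y_i(t)-y_j(t)|$ is bounded below on a time set with $\sum_t \allone^T(A_{S\bar S}(t)+A_{\bar S S}(t))\allone=\infty$, contradicting the summability above. Once $V(t)\to 0$ for every $x$ and every $s$, ergodicity follows; and for any other absolute probability sequence $\pi'$, the identity $\pi'^T(s)=\pi'^T(t)A(t:s)$ together with $A(t:s)\to\allone\pi^T(s)$ and $\pi'^T(t)\allone=1$ forces $\pi'(s)=\pi(s)$.

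For the ``only if'' direction, assume the infinite flow graph is disconnected and let $S\subsetneq [n]$ be a union of its connected components, so that
\begin{equation*}
M(s):=\sum_{t=s}^{\infty}\allone^T(A_{S\bar S}(t)+A_{\bar S S}(t))\allone\longrightarrow 0\quad\text{as } s\to\infty.
\end{equation*}
I build two distinct absolute probability sequences by truncated backward propagation: for each $T$, set $\mu^{(T)}(T):=|S|^{-1}\allone_S$ and $\nu^{(T)}(T):=|\bar S|^{-1}\allone_{\bar S}$, define $\mu^{(T)T}(t)=\mu^{(T)T}(t+1)A(t)$ and $\nu^{(T)T}(t)=\nu^{(T)T}(t+1)A(t)$ for $t<T$, and use a compactness/diagonal extraction to obtain subsequential limits $\mu^*$ and $\nu^*$, which automatically satisfy the absolute probability equation. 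Writing $\sigma^{(T)}(t):=\mu^{(T)T}(t)\allone_S$ and using the row-stochasticity identity $A_{SS}(t)\allone_S=\allone_S-A_{S\bar S}(t)\allone_{\bar S}$, one obtains
\begin{equation*}
|\sigma^{(T)}(t+1)-\sigma^{(T)}(t)|\leq \allone^T(A_{S\bar S}(t)+A_{\bar S S}(t))\allone.
\end{equation*}
Telescoping from $t=s$ to $T-1$ gives $\sigma^{(T)}(s)\geq 1-M(s)$ for every $T\geq s$; passing to the limit yields $\mu^{*T}(s)\allone_S\geq 1-M(s)$, and symmetrically $\nu^{*T}(s)\allone_S\leq M(s)$. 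Choosing $s$ large enough that $M(s)<1/2$ forces $\mu^*(s)\neq\nu^*(s)$, yielding two distinct absolute probability sequences.

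The main technical obstacle is the cut argument in the ``if'' direction: one must convert the pairwise bound $\sum_t a_{ij}(t)(y_i(t)-y_j(t))^2<\infty$ into a contradiction with connectedness of the infinite flow graph even though the coordinates of $y(t)$ lying far from $c$ may migrate over time. I plan to handle this by partitioning $[n]$ at each $t$ into a ``near-$c$'' block and a ``far-from-$c$'' block, with threshold calibrated to a positive lower bound for $V(t)$; connectedness of the infinite flow graph then forces at least one infinite-flow edge to straddle this (time-varying) cut on a time set of infinite cumulative cross-flow, producing the desired contradiction. The remaining ingredients --- compactness for stochastic vectors, the diagonal extraction producing $\mu^*,\nu^*$, and the telescoping bound on $\sigma^{(T)}$ --- are routine.
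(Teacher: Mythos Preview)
Your approach differs from the paper's in being self-contained: the paper dispatches both directions by invoking external results --- Theorem~4.4 of~\cite{touri2012product} (Class~$\pstar$ plus feedback property implies infinite flow stability), Lemma~\ref{lem:obv}, and Blackwell's theorem in~\cite{blackwell1945finite} (ergodicity of the chain is equivalent to uniqueness of its absolute probability sequence) --- whereas you reprove these ingredients from scratch. Your ``only if'' direction is a correct explicit rendering of the Blackwell/Kolmogorov compactness construction, and the telescoping bound on $\sigma^{(T)}$ is valid as written.

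There is, however, a genuine gap at exactly the step you flag in the ``if'' direction. The Lyapunov identity and the consequence $\sum_t a_{ij}(t)(y_i(t)-y_j(t))^2<\infty$ for every pair are correct, but the time-varying cut argument does not close as stated. Connectedness of $G^\infty$ tells you only that $\sum_{t\geq 0}\allone^T(A_{S\bar S}(t)+A_{\bar S S}(t))\allone=\infty$ for each \emph{fixed} proper $S$; it gives no control over $\sum_{t}\allone^T\bigl(A_{S(t)\,\overline{S(t)}}(t)+A_{\overline{S(t)}\,S(t)}(t)\bigr)\allone$ when the cut $S(t)$ migrates with $t$. Pigeonholing over the finitely many possible cuts yields a fixed $S^*$ that recurs infinitely often, but nothing prevents the large cross-flows across $S^*$ from being concentrated at the times when the active cut is some other $S'$, so the recurrence set of $S^*$ need not carry infinite cumulative cross-flow, and no contradiction with summability follows. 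This is precisely the point at which the paper leans on the infinite-flow-stability theorem of~\cite{touri2012product}; without a substitute for that argument, your forward implication is incomplete.
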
 

\begin{proof}
From Theorem~\ref{thm:main}, we know that $\{A(k)\}_{k=0}^\infty$ admits a uniformly positive absolute probability sequence, i.e., $\{A(k)\}_{k=0}^\infty\in\pstar$.  As a result, Theorem 4.4 of~\cite{touri2012product} implies that $\{A(k)\}_{k=0}^\infty$ is infinite flow stable. 

Now, suppose that the infinite flow graph of $\{A(k)\}_{k=0}^\infty$ is connected. Then we know from Lemma~\ref{lem:obv} that $\{A(k)\}_{k=0}^\infty$ is ergodic. It  now follows from Theorem 1 of~\cite{blackwell1945finite} that $\{A(k)\}_{k=0}^\infty$ has a unique absolute probability sequence.

On the other hand, suppose that the infinite flow graph of  $\{A(k)\}_{k=0}^\infty$ is not connected. Then, by Lemma 3.6 of~\cite{touri2012product}, either there exists an initial condition $(k_0,x(k_0))$ with $k_0\in\N$ and $x(k_0)\in\R^n$ such that $x(k+1)=A(k)x(k)$ does not converge to a steady state (Case 1: $\lim_{k\rightarrow\infty} x(k) $ does not exist), or there exist indices $i$ and $j$ such that $(i,j)\in [n]\times [n]$ and $\limsup_{k\rightarrow\infty} |x_i(k)-x_j(k)|> 0$ (Case 2).

In the first case, we know that $\lim_{k\rightarrow\infty} A(k:k_0)$ does not exist (for otherwise, $\lim_{k\rightarrow\infty} x(k_0)=\lim_{k\rightarrow\infty }A(k:k_0)x(k_0)$ would exist). Hence, $\{A(k)\}_{k=0}^\infty$ is not ergodic.

Consider now the second case and suppose that $\{A(k)\}_{k=0}^\infty$ is ergodic. Then, for every initial condition $(k_0,x(k_0))$, there exists a $\pi(k_0)\in\R^n$ such that $\lim_{k\to\infty} x(k) = \lim_{k\to\infty} A(k:k_0) x(k_0)= \pi^T(k_0) x(k_0)\allone_n$, which implies that $\lim_{k\rightarrow \infty} x_l(k)=\lim_{k\to\infty}x_m(k)$ for all $l,m\in[n]$. However, this contradicts the hypothesis of Case 2. Hence, $\{A(k)\}_{k=0}^\infty$ cannot be ergodic.

We have thus shown that if the infinite flow graph of $\{A(k)\}_{k=0}^\infty$ is not connected, it is not ergodic. It now follows from Theorem 1  in~\cite{blackwell1945finite} that if the infinite flow graph of $\{A(k)\}_{k=0}^\infty$ is not connected, then the chain does not admit a unique absolute probability sequence.
\end{proof}

Theorem~\ref{thm:uniqueness}  parallels the uniqueness assertion of the Perron-Frobenius theorem. In view of Remark~\ref{rem:original}, the classical theorem asserts that, if a matrix describes a static network that is reciprocal and whose infinite flow graph is connected, then its principal left eigenvector is unique. Analogously, Theorem~\ref{thm:uniqueness} asserts that, if a stochastic chain describes a time-varying network that is approximately reciprocal and whose infinite flow graph is connected,  its absolute probability sequence is unique.
 
 Besides, it is worth noting that approximately reciprocal chains whose infinite flow graphs are connected are a time-varying analog of irreducible matrices{. This is because, as shown in~\cite{touri2012product,touri2013product}, every static irreducible chain (i.e., every stochastic chain $\{A(k)\}_{k=0}^\infty$ for which there exists  an irreducible matrix $A_0$ such that  $A(k)=A_0$ for all $k\in\N_0$) is approximately reciprocal with a connected infinite flow graph. Therefore, we shall henceforth use the term \textit{irreducible chains} to refer to (either static or non-static) stochastic chains that are approximately reciprocal with connected infinite flow graphs. 
 
{\begin{remark} \label{rem:irreducible} Recall the  stochastic chain $\{A(k)\}_{k=0}^\infty$ defined in Example~\ref{eg:one}. It can be verified that the infinite flow graph of this chain is connected. It follows from Example~\ref{eg:one} that $\{A(k)\}_{k=0}^\infty$ is irreducible but not $B$-connected. Hence, our notion of irreducibility is more general than that of $B$-connectivity.
\end{remark}}

}

\begin{figure*}[h]
    \centering
    \includegraphics[height = 2.8 cm]{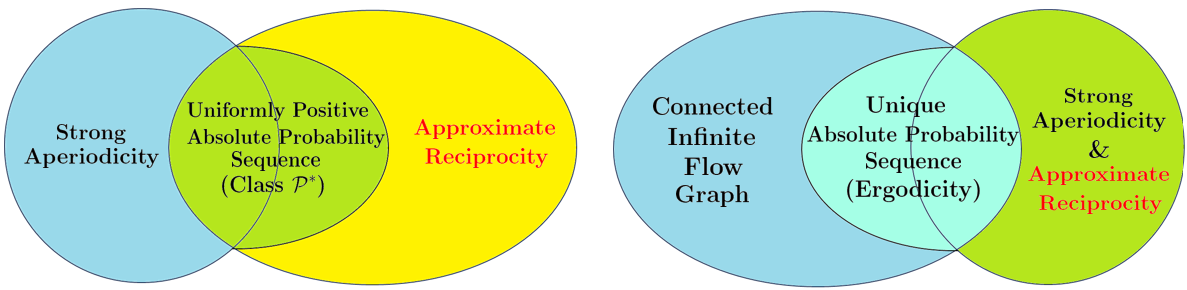}
    \caption{{\small Venn diagrams illustrating the relationships between the key concepts
    }
    }
\end{figure*}
\subsection{Some Interpretations of the Main Results}
To interpret Theorems~\ref{thm:main} -~\ref{thm:ct_uniqueness}, we start from some existing interpretations of the assertions of the classical theorem that we stated in Remark~\ref{rem:original}, and we extend these interpretations to the case of time-varying networks.
\begin{enumerate}
    \item \textit{Markov Chains:} The eigenvector assertions of the Perron-Frobenius theorem can be interpreted as follows: for a time-homogeneous Markov chain with transition probabilities given by an irreducible matrix, the probability of visiting any given state converges asymptotically in time to a unique positive value, regardless of the initial probability distribution. Analogously, Theorems~\ref{thm:main} and~\ref{thm:uniqueness} can be interpreted as follows: given a starting time, for a backward-propagating time-non-homogeneous Markov chain with transition probabilities given by an irreducible, strongly aperiodic chain, the probability of visiting any given state converges asymptotically in time to a unique positive value, regardless of the initial probability distribution. Although this limiting probability is a function of the starting time, it is bounded away from zero by a fixed threshold that does not depend on the starting time.
    \item \textit{Opinion Dynamics:} In the context of opinion dynamics, the matrix $A(k)$ can be interpreted as the \textit{influence matrix} at time $k$, i.e., $a_{ij}(k)$ quantifies the extent to which agent $i$ values agent $j$'s opinion at time $k$ (or equivalently, the extent to which agent $j$ influences agent $i$ at time $k$). Therefore, an irreducible chain (and hence also an irreducible matrix) describes a network in which every subset of agents influences the complementary subset persistently over the entire course of opinion evolution, which means that there exists no group of elite agents that dominate others forever. Additionally, as mentioned before, absolute probability sequences can be interpreted as quantifying the agents' social powers.
    
    Therefore, an interpretation of the eigenvector assertions of the original theorem is as follows: in a static social network, the social power of every agent (given by the eigenvector centrality of the corresponding network node) is unique and positive if no subset of agents dominates other agents forever. Analogously, Theorems~\ref{thm:main}-~\ref{thm:uniqueness} can be interpreted as follows: in a time-varying social network, the time-varying social power of every agent (given by the Kolmogorov centrality of the corresponding network node) is unique and uniformly positive (lower-bounded by a constant positive threshold)  if no subset of agents dominates other agents forever.
\end{enumerate}

\subsection{Continuous Time}

We now extend our discrete-time results (Theorems~\ref{thm:main} and~\ref{thm:uniqueness}) to continua of row-stochastic matrices, henceforth called continuous-time stochastic chains. Consider the following continuous-time analog of the discrete-time dynamics $x(k+1)=A(k)x(k)$:
\begin{align}\label{eq:continuous_time_dynamics}
    \dot x(t) = A(t)x(t)\quad \text{for all }t\geq 0,
\end{align}
where $A(t)=-L(t)$ is the negative of the Laplacian matrix of a given digraph $G(t)$. Throughout this section, we assume
\begin{align}\label{eq:assumption}
    \int_{t_1}^{t_2} a_{ij}(t)dt<\infty \quad \text{for all }0\leq t_1<t_2<\infty.
\end{align}
It is well-known~\cite{martin2015continuous,sontag2013mathematical} that under Assumption~\eqref{eq:assumption}, the solution to~\eqref{eq:continuous_time_dynamics} is unique and can be expressed as
\begin{align}
    x(t) = \Phi(t,\tau)x(\tau)\quad\text{for all } t\geq \tau \geq 0,
\end{align}
where the \textit{state-transition matrix} $\Phi$ is the unique solution to the equation continuum $\Phi(t,\tau) = I + \int_\tau^{t} A(\tau')\Phi(\tau',\tau) d\tau'$ for all
$t\geq \tau \geq 0$.

It is also known that
\begin{align}
    \Phi(t_2,t_1)=\Phi(t_2,\tau)\Phi(\tau,t_1)\quad\text{for all }t_2\geq \tau\geq t_1\geq 0
\end{align}
and that $\Phi(\tau,\tau)=I_n$ for all $\tau\geq 0$.
{Moreover, ${A(t)=-L(t)}$ implies that} $\Phi(t,\tau)$ is row-stochastic for all ${t\geq \tau \geq 0}$. Therefore, for any sequence of increasing times $\{t_k\}_{k=1}^\infty$ in $[0,\infty)$, if we let $B(k):=\Phi(t_{k+1},t_k)$ for all $k\in\N_0$, then we have $B(m:\ell)=\Phi(t_m:t_\ell)$ for all $\ell,m\in\N$ with $\ell\leq m$. As a result, an application of Proposition~\ref{prop:main} to the stochastic chain $\{B(k)\}_{k=0}^\infty$ yields the following result.  
\begin{lemma}\label{lem:state_transition_matrix_suff} Let $\Phi(\cdot,\cdot)$ denote the state transition matrix for the dynamics~\eqref{eq:continuous_time_dynamics} under the assumption~\eqref{eq:assumption}. Consider now a sequence of increasing times $\{t_k\}_{k=0}^\infty$ in $[0,\infty)$ and a constant $\gamma>0$ such that $\Phi(t_{k+1},t_k)\geq \gamma I$ for all $k\in\N_0$. If there exist constants $\tilde p_0,\tilde \beta\in(0,\infty)$ such that 
\begin{align}\label{eq:approx_reciprocity_state}
   &\tilde p_{0} \sum_{k=\ell}^{m}\allone_{|S|}^T \Phi_{S\bar S}\left(t_{k+1}, t_{k}\right)\allone_{|\bar S|} \cr 
   &\qquad \leq \sum_{k=\ell}^{m}\allone_{|\bar S|}^T \Phi_{\bar S S}\left(t_{k+1}, t_{k}\right)\allone_{|S|}+\tilde\beta
\end{align}
holds for all sets $S\subset [n]$ and for all $\ell,m\in\N_0$ with $\ell\leq m$, then there exists an $\eta>0$ such that $\Phi(t_m,t_\ell)\geq \eta I_n$ for all $\ell,m\in\N_0$ satisfying $\ell\leq m$.
\end{lemma}

It is clear from Lemma~\ref{lem:state_transition_matrix_suff}  and from~\cite[Lemma 8]{bolouki2015eminence} that the discrete-time chain $\{\Phi(t_{k+1},t_k)\}_{k=0}^\infty$ lies in Class $\pstar$ if approximate reciprocity~\eqref{eq:approx_reciprocity_state} and the strong aperiodicity condition $\Phi(t_{k+1},t_k)\geq \gamma I_n$ are satisfied. The following assumptions ensure that both these conditions are met.

\begin{assumption}[\textbf{Uniform Bound on Integral Weights}~\cite{martin2015continuous}] \label{assume:uniform_bound} There exists an $M<\infty$ and an increasing sequence $\{t_k\}_{k=0}^\infty$ in $[0,\infty)$ such that
$
    \int_{t_k}^{t_{k+1}} a_{ij}(t)dt\leq M
$
for all $k\in\N$ and all $i,j\in[n]$ with $i\neq j$. 
\end{assumption}

Assumption~\ref{assume:uniform_bound} is sufficient to guarantee the strong aperiodicity condition $\Phi(t_{k+1},t_k)\geq \gamma I_n$ for some $\gamma>0$ and all $k\in\N_0$. This is evident from the proof of Lemma 8 in~\cite{martin2015continuous}.

\begin{assumption}[\textbf{Continuous-time Approximate Reciprocity}]\label{assume:approx_reci}  There exist $p_0,\beta\in(0,\infty)$ such that 
$$
    p_0\int_{t_\ell}^{t_m}  \allone_{|S|}^T A_{S\bar S}(t)\allone_{|\bar S|} dt \leq \int_{t_\ell}^{t_m} \allone_{|\bar S|} ^T A_{\bar S S}(t)\allone_{|S|} dt+\beta
$$
holds for all sets $S\subset[n]$ and for all $\ell,m\in\N_0$ with $\ell\leq m$.
\end{assumption}

\begin{lemma}\label{lem:five}
Under Assumption~\ref{assume:uniform_bound}, Assumption~\ref{assume:approx_reci} is equivalent to the existence of constants $\tilde p_0,\tilde\beta\in(0,\infty)$ such that~\eqref{eq:approx_reciprocity_state} holds for all sets $S\subset[n]$.
\end{lemma}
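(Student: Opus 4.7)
The plan is to reduce Lemma~\ref{lem:five} to a single quantitative comparison between the continuous-time integrated rate $F_S(k) := \int_{t_k}^{t_{k+1}}\allone^T A_{S\bar S}(t)\allone\,dt$ and the state-transition block sum $\phi_S(k) := \allone^T \Phi_{S\bar S}(t_{k+1},t_k)\allone$, valid for every interval $[t_k, t_{k+1}]$ and every cut $S\subsetneq [n]$. Concretely, I would first prove the following key estimate: under Assumption~\ref{assume:uniform_bound} there exist constants $0<c_1\leq c_2<\infty$, depending only on $n$ and $M$, such that $c_1 F_S(k)\leq \phi_S(k)\leq c_2 F_S(k)$ for all admissible $k$ and $S$.

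The starting point for both inequalities is the Peano integral identity $\Phi(t_{k+1},t_k) = I + \int_{t_k}^{t_{k+1}} A(\tau)\Phi(\tau,t_k)\,d\tau$, whose $(S,\bar S)$ block equals $\int_{t_k}^{t_{k+1}}\bigl[A_S(\tau)\Phi_{S\bar S}(\tau,t_k) + A_{S\bar S}(\tau)\Phi_{\bar S}(\tau,t_k)\bigr]\,d\tau$. For the lower bound I would invoke Lemma~8 of~\cite{martin2015continuous}, which under Assumption~\ref{assume:uniform_bound} yields $\Phi(\tau,t_k)\geq\gamma I$ for some $\gamma=\gamma(n,M)>0$ on every sub-interval; combined with non-negativity of $A_{S\bar S}$ and a cancellation argument controlling the (possibly negative) first integrand, this gives $\phi_S(k)\geq c_1 F_S(k)$. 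For the upper bound I would track the auxiliary dynamics $\dot y = A(t) y$ with $y(t_k)=\allone_{\bar S}$, so that $\phi_S(k)=\allone_S^T y(t_{k+1})$; the maximum principle forces $y(t)\in[0,1]^n$, and the evolution of $m_S(t):=\allone_S^T y(t)$ can be written as $\dot m_S(t) = \sum_{i\in S,\,j\in\bar S} a_{ij}(t)(y_j(t)-y_i(t)) + r(t)$, where the intra-$S$ remainder $r(t)$ integrates to at most $2n^2 M$ over $[t_k,t_{k+1}]$ by Assumption~\ref{assume:uniform_bound}; a Gronwall-type bootstrap then yields the desired multiplicative upper bound.

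Given the key estimate, the equivalence in the lemma follows by chaining. If Assumption~\ref{assume:approx_reci} holds with constants $p_0,\beta$, I would set $\tilde p_0 := p_0 c_1/c_2$ and $\tilde\beta := c_1\beta$ and compute
\begin{align*}
\tilde p_0\sum_{k=\ell}^m \phi_S(k)
&\leq \tilde p_0\, c_2 \sum_{k=\ell}^m F_S(k) = p_0 c_1\int_{t_\ell}^{t_{m+1}}\allone^T A_{S\bar S}(t)\allone\,dt\\
&\leq c_1\int_{t_\ell}^{t_{m+1}}\allone^T A_{\bar S S}(t)\allone\,dt + c_1\beta
\leq \sum_{k=\ell}^m \phi_{\bar S}(k) + \tilde\beta,
\end{align*}
which is exactly~\eqref{eq:approx_reciprocity_state}. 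The reverse implication follows identically after swapping the roles of $c_1$ and $c_2$. The principal obstacle is proving the upper bound $\phi_S(k)\leq c_2 F_S(k)$ in the key estimate, since intra-$S$ asymmetries can amplify the cross-flow from $\bar S$ to $S$ (for instance via a cascade that pushes incoming mass through the subgraph on $S$); the saving grace is that Assumption~\ref{assume:uniform_bound} caps the intra-$S$ activity over any single interval $[t_k,t_{k+1}]$ by $n^2 M$, so the amplification factor is uniformly bounded in $k$ and $S$.
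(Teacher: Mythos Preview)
Your approach is correct and essentially identical to the paper's: establish a two-sided multiplicative comparison $c_1 F_S(k)\le \phi_S(k)\le c_2 F_S(k)$ and then chain exactly as you do. The paper shortcuts your first step by citing Proposition~7 of~\cite{martin2015continuous}, which gives precisely this estimate with $c_1=G$ (depending on $n,M$) and $c_2=n$, and then carries out the same chaining computation. One small remark on your upper-bound sketch: the statement ``$r(t)$ integrates to at most $2n^2M$'' by itself only yields an additive error, and the Gronwall route requires the sharper observation $|r(t)|\le\bigl(\sum_{i\ne j\in S}a_{ij}(t)\bigr)m_S(t)$; a cleaner argument is to note that $\max_{i\in S}y_i(t)\le\int_{t_k}^{t}\allone^T A_{S\bar S}(\tau)\allone\,d\tau$ by the maximum principle, which gives $\phi_S(k)\le |S|\,F_S(k)\le nF_S(k)$ directly and recovers the constant in Proposition~7.
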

\begin{proof}
We first recall from Proposition 7 of~\cite{martin2015continuous} that under Assumption~\ref{assume:uniform_bound}, there exists a constant $G\in (0,\infty)$ such that 
\begin{align}\label{eq:comparable}
    G   \int_{t_{k}}^{t_{k+1}} \allone_{|S|}^T A_{S\bar S}(t) \allone_{|\bar S|} d t &\leq \allone_{|S|} ^T \phi_{S\bar S}\left(t_{k+1}, t_{k}\right)\allone_{|\bar S|}\cr
    &\leq n\int_{t_{k}}^{t_{k+1}} \allone_{|S|}^T A_{S\bar S}(t) \allone_{|\bar S|} d t\quad\quad
\end{align}
holds for all $k\in\N_0$ and all sets $S\subset[n]$.

Now, suppose Assumption~\ref{assume:approx_reci} holds. Then, for all $S\subset[n]$ and $\ell,m\in\N_0$ with $\ell\leq m$, we have
\begin{align*}
    &G\frac{p_0}{n}\sum_{k=\ell}^m \allone_{|S|}^T\Phi_{S\bar S}(t_{k+1},t_k)\allone_{|\bar S|} \cr
    &\leq G p_0\int_{t_\ell}^{t_{ m+1} } \allone_{|S|} ^T A_{S\bar S}(t) \allone_{|\bar S|} dt\cr
    &\stackrel{(a)}{\leq} G\left(\int_{t_\ell}^{t_{m+1}}\allone_{|S|} ^T A_{S\bar S}(t)\allone_{|\bar S|} dt + \beta\right) \cr
    &\leq \sum_{k=\ell}^m \allone_{|S|} ^T\Phi_{S\bar S}(t_{k+1},t_k)  + G\beta,
\end{align*}
 where $(a)$ follows from Assumption~\ref{assume:approx_reci}. Therefore,~\eqref{eq:approx_reciprocity_state} holds with $\tilde p_0 = \frac{G p_0}{n}$ and $\tilde\beta=G\beta$. 

Similarly, if we are given that~\eqref{eq:approx_reciprocity_state} holds for all $S\subset[n]$, then we can again use~\eqref{eq:comparable} to make arguments similar to the preceding ones to show that Assumption~\ref{assume:approx_reci} holds with $p_0=\frac{G}{n}\tilde p_0$ and $\beta = \frac{\tilde \beta}{n}$. 
\end{proof}
We now use Lemma~\ref{lem:five} to show that approximate reciprocity in continuous time is equivalent to $\{A(k)\}_{k=0}^\infty$ belonging to Class $\pstar$. To begin, we first define the continuous-time analogs of absolute probability sequences and Class $\pstar$.

\begin{definition} [\textbf{Continuous-time Absolute Probability Sequence}~\cite{bolouki2015eminence}] \label{def:ct_abs_prob_seq}
A continuum of stochastic vectors $\{\pi(t)\}_{t\geq 0}$ is said to be an absolute probability sequence for a continuous-time stochastic chain $\{A(t)\}_{t\geq 0}$ if 
\begin{align}\label{eq:last_label}
    \pi^T(t)\Phi(t,\tau)=\pi^T(\tau)
\end{align}
holds for all $t\geq \tau\geq 0$, where $\Phi(\cdot,\cdot)$ denotes the state transition matrix for the dynamics~\eqref{eq:continuous_time_dynamics}.
\end{definition}

\begin{definition} [\textbf{Continuous-time Class $\pstar$}~\cite{bolouki2015eminence}] \label{def:ct_pstar}
We let (Continuous-time Class-)$\mathcal P^*$ be the set of all continuous-time stochastic chains that admit uniformly positive absolute probability sequences, i.e., a continuum of stochastic vectors $\{\pi(t)\}_{t\ge0}$ such that~\eqref{eq:last_label} holds and $\pi(t)\geq p^*\mathbf 1_n$ for some scalar $p^*>0$ and all $t\ge 0$. (Note that the absolute probability sequence and the value of $p^*$ may vary from chain to chain). 
\end{definition}

We are now ready to state the first main result of this section.

\begin{theorem} [\textbf{Continuous-time Analog of Theorem~\ref{thm:main}}]\label{thm:ct_main}
Let $\{A(t)\}_{t\geq 0}$ be a continuous-time stochastic chain that satisfies Assumption~\ref{assume:uniform_bound}. Then $\{A(t)\}_{t\geq 0}$ has a uniformly positive absolute probability sequence if and only if it is approximately reciprocal.
\end{theorem}

\begin{proof}
Suppose $\{A(t)\}_{t\geq 0}$ has a uniformly positive absolute probability sequence, i.e., suppose $\{A(t)\}_{t\geq 0}\in\pstar$. Then we know that $\{\Phi(t_{k+1},t_k)\}_{k=0}^\infty\in\pstar$ in discrete time. It follows from Proposition~\ref{prop:nec_cond} that $\{\Phi(t_{k+1},t_k)\}_{k=0}^\infty$ is approximately reciprocal in discrete time, i.e., there exist constants $\tilde p_0>0$ and $\tilde \beta\in(0,\infty)$ such that ~\eqref{eq:approx_reciprocity_state} holds for all $S\subset[n]$. Lemma~\ref{lem:five} now implies that Assumption~\ref{assume:approx_reci} holds, which means that $\{A(t)\}_{t\geq 0}$ is approximately reciprocal.

On the other hand, suppose we are given that $\{A(t)\}_{t\geq 0}$ is approximately reciprocal with respect to the increasing sequence of times $\{t_k\}_{k=0}^\infty\subset[0,\infty)$. We now show that for any two times $\tau_1, \tau_2\geq 0$ with $\tau_1<\tau_2$, the chain $\{A(t)\}_{t\geq 0}$ is also approximately reciprocal with respect to the augmented sequence of times $t_1,t_2,\ldots,t_q, \tau_1,t_{q+1},\ldots,t_{r},\tau_2,t_{r+1},\ldots$, where $q:=\max\{\ell\in\N_0:t_\ell\leq \tau_1\}$ and ${r:=\min\{\ell\in\N_0:t_\ell\geq \tau_2\}-1}$. Using Assumption~\ref{assume:uniform_bound} for any set $S\subset[n]$, we have
\begin{align*}
    &\int_{\tau_1}^{t_{q+1}} \mathbf 1_{|S|} ^T A_{S\bar S}(t) \mathbf 1 _{|\bar S|}dt\leq \sum_{i\in[n]}\sum_{j\in[n]\setminus\{i\}} \int_{\tau_1}^{t_{q+1}} a_{ij}(t)dt \cr
    &\qquad\leq \sum_{i\in[n]}\sum_{j\in[n]\setminus\{i\}} \int_{t_q}^{t_{q+1}} a_{ij}(t) dt \leq n(n-1)M.
\end{align*}
Similarly, $\int_{t_{q}}^{\tau_1} \allone_{|S|} ^T A_{S\bar S}(t)\allone_{|\bar S|} dt $, $\int_{t_{r}}^{\tau_2} \allone_{|S|} ^T A_{S\bar S}(t)\allone_{|\bar S|} dt $,  and $\int_{\tau_2}^{t_{r+1} } \allone_{|S|} ^T A_{S\bar S}(t)\allone_{|\bar S|} dt$ are all upper bounded by $n(n-1)M$. In addition, we have $\int_{t_\ell}^{t_m} \allone_{|\bar S|} ^T A_{\bar S S}(t)\allone_{|S|} dt \geq 0$ for all $\ell,m\in\N_0$ with $\ell<m$. As a result, the inequality in Assumption~\ref{assume:approx_reci} implies that for all $S\subset[n]$ and $\ell<m$, we have
\begin{align*}
    &p_0\int_{t_\ell'}^{t_m'} \allone_{|S|} ^T A_{S\bar S}(t)\allone _{|\bar S|} dt \cr
    &\leq \int_{t_\ell'}^{t_m'}\allone_{|\bar S|} ^T A_{\bar S S}(t)\allone_{|S|} dt +\beta + 2n(n-1)M p_0,
\end{align*}
where $\{t'_k\}_{k=0}^\infty$ denotes the augmented sequence  $t_1,t_2,\ldots,t_q, \tau_1,t_{q+1},\ldots,t_{r},\tau_2,t_{r+1},\ldots$. Invoking Lemma~\ref{lem:five} now shows that the stochastic chain $\{\Phi(t'_{k+1},t'_k)\}_{k=0}^\infty$ is approximately reciprocal in discrete time. Moreover, Assumption~\ref{assume:uniform_bound} (which continues to hold after replacing $\{t_k\}_{k=0}^\infty$ with $\{t'_k\}_{k=0}^\infty$) and Lemma 8 in~\cite{martin2015continuous} together imply that $\{\Phi(t'_{k+1},t'_k)\}_{k=0}^\infty$ is strongly aperiodic. It now follows from Proposition~\ref{prop:main} that there exists a constant $\eta>0$ such that $\Phi(t'_m:t'_\ell)\geq \eta I_n$ for all $\ell,m\in\N_0$ satisfying $\ell\leq m$. In particular, we have $\Phi(\tau_2:\tau_1)\geq \eta I_n$. As $\tau_1$ and $\tau_2$ are arbitrary, it follows from~\cite[Lemma 8]{bolouki2015eminence} that $\{A(t)\}_{t\geq 0}\in\pstar$.
\end{proof}

The next step is to provide a continuous-time analog of Theorem~\ref{thm:uniqueness}. For this purpose, we define the continuous-time analog of the infinite flow graph as follows.

\begin{definition} [\textbf{Infinite Flow Graph in Continuous Time}] \label{def:ct_infinite_flow_graph} For a continuous-time stochastic chain $\{A(t)\}_{t\geq 0}$, we define its \emph{infinite flow graph} to be the graph $G^\infty= ([n], E^\infty)$ with 
$$
    {E}^{\infty}:=\left\{\{i, j\} \,\Big \lvert\, \int_{0}^{\infty}\left(a_{i j}(t)+a_{j i}(t)\right)dt=\infty, i \neq j \in[m]\right\}.
$$
\end{definition}

We now state the desired theorem.

\begin{theorem} [\textbf{Continuous-time Analog of Theorem~\ref{thm:uniqueness}}] \label{thm:ct_uniqueness}
Let $\{A(t)\}_{t\geq 0}$ be a continuous-time stochastic chain that satisfies Assumptions~\ref{assume:uniform_bound} and~\ref{assume:approx_reci}. Then $\{A(t)\}_{t\geq 0}$ admits a unique absolute probability sequence if and only if its infinite flow graph is connected.
\end{theorem}

\begin{proof}
By repeating some of the arguments used to prove Theorem~\ref{thm:ct_main}, we can show that Assumptions~\ref{assume:uniform_bound} and~\ref{assume:approx_reci} continue to hold (if only with different constants) even if we augment the sequence $\{t_k\}_{k=0}^\infty$ by inserting into it an arbitrary constant $\tau\geq 0$. By Lemma 8 of~\cite{martin2015continuous}, this further implies that the discrete-time chain $\{\Phi(t'_{k+1}:t'_k)\}_{k=0}^\infty$ (where $\{t'_k\}_{k=0}^\infty$ denotes the augmented sequence $t_1,t_2,\ldots,\tau,\ldots,$) is strongly aperiodic. In addition, since $\{A(t)\}_{t\geq 0}$ satisfies the uniform bound assumption (Assumption~\ref{assume:uniform_bound}) in addition to the condition of approximate reciprocity, we know from Theorem~\ref{thm:ct_main} that $\{A(t)\}_{t\geq 0}\in\pstar$. By Definitions~\ref{def:ct_abs_prob_seq} and~\ref{def:ct_pstar}, this implies that $\{\Phi(t'_{k+1}:t'_k )\}_{k=0}^\infty\in\pstar$ in discrete time.  Hence, by Theorem~\ref{thm:main}, $\{\Phi(t'_{k+1}:t'_k )\}_{k=0}^\infty$ is approximately reciprocal.

Now, the infinite flow graph of $\{A(t)\}_{t\geq 0}$ being connected is equivalent to $\int_{0}^\infty \allone_{|S|} ^T A_{S\bar S}(t)\allone_{|\bar S|} dt+\int_{0}^\infty \allone_{|\bar S|} ^T A_{\bar S S}(t)\allone_{|S|} dt=\infty$ being satisfied for all $S\subset[n]$, which, by ~\cite[Proposition 7] {martin2015continuous}, is in turn equivalent to the infinite flow graph of the chain $\{\Phi(t'_{k+1}:t'_k)\}_{k=0}^\infty$ being connected. By the strong aperiodicity and the approximate reciprocity of  $\{\Phi(t'_{k+1},t'_k)\}_{k=0}^\infty$ (shown above), Theorem~\ref{thm:uniqueness} implies that the connectivity of the infinite flow graph of $\{\Phi(t'_{k+1},t'_k)\}_{k=0}^\infty$ is equivalent to {$\{\Phi(t'_{k+1},t'_k)\}_{k=0}^\infty$ } having a unique absolute probability sequence. 

{Thus}, the infinite flow graph of $\{A(t)\}_{t\geq 0}$ is connected if and only if $\{\Phi(t'_{k+1}:t'_k)\}_{k=0}^\infty$ has a unique absolute probability sequence, i.e., if and only if the  vectors $\{\pi(t_k)\}_{k=0}^\infty\cup\{\pi(\tau)\}$ are unique. Since $\tau$ is arbitrary, it follows that the infinite flow graph of $\{A(t)\}_{t\geq 0}$ is connected if and only if the absolute probability sequence $\{\pi(\tau)\}_{\tau\geq 0}$ is unique.
\end{proof}

\section{Applications}\label{sec:applications}

We now derive a few corollaries of our main results. {It is worth noting that many of these corollaries have been hitherto known to hold only for instantaneously reciprocal chains and not for the broader class of approximately reciprocal chains.}
\subsection{Infinite Flow Stability of Independent Random Chains}
The concept of independent random chains is a straightforward extension of that of deterministic chains: a discrete-time stochastic chain $\{A(k)\}_{k=0}^\infty$ is called an independent random chain if $\{A(k)\}_{k=0}^\infty$ are all random and independently distributed. Note that every deterministic  chain is an independent random chain composed of degenerate random matrices.

Based on this definition, we can extend the notion of Class $\pstar$ to independent random chains as follows:
an independent random chain $\{A(k)\}_{k=0}^\infty$ is said to belong to Class $\pstar$ if the expected chain $\{\E[A(k)]\}_{k=0}^\infty$ belongs to Class $\pstar$.

For an application of our main results to independent random chains, we will also need a notion of strong aperiodicity for such chains. We introduce this notion as follows: suppose $\{A(k)\}_{k=0}^\infty$ is an independent random chain. We say that $\{A(k)\}_{k=0}^\infty$ has the \textit{feedback property}~\cite{touri2013product} if there exists a {\textit{feedback coefficient}} $\gamma>0$ such that $\E[a_{ii}(k)a_{ij}(k)]\geq \gamma \E[a_{ij}(k)]$ for all $k\in\N_0$ and all distinct $i,j\in[n]$.

In addition to the feedback property, we need a concept that captures the notion of ergodicity (Definition~\ref{def:ergodicity}) for pairs of row indices of a stochastic chain. Consider a {random} stochastic chain $\{A(k)\}_{k=0}^\infty$. We say that $i\in[n]$ and $j\in[n]$ are \textit{mutually ergodic} indices for $\{A(k)\}_{k=0}^\infty$, which we denote by $i\leftrightarrow_A j$, if $\lim_{k\rightarrow\infty} (x_i(k) - x_j(k)) =0$ {holds \textit{a.s.}}\ for the dynamics $x(k+1) = A(k)x(k)$ started with an arbitrary initial condition $x(k_0) = x_0$ (where $k_0\in\N_0$ and $x_0\in\R^n$). {If $\{A(k)\}_{k=0}^\infty$ is deterministic, we adopt the same definition for mutual ergodicity after dropping the qualifier ``almost surely''.}

Based on these concepts, we have the following result.

\begin{corollary}\label{cor:inf_flow_stable} Let $\{A(k)\}_{k=0}^\infty$ be an independent random chain with feedback property, and suppose the expected chain $\{\bar A(k)\}_{k=0}^\infty:=\{\E[A(k)]\}_{k=0}^\infty$ is approximately reciprocal. Then,
\begin{enumerate} [(i)]
    \item \label{item:inf_flow_stable} $\{A(k)\}_{k=0}^\infty$ is infinite flow stable almost surely. 
    \item \label{item:mutual_ergodic} For any two indices $i$ and $j$ in $[n]$, we have $i\leftrightarrow_A j$ if and only if $i\leftrightarrow_{\bar A} j$.
    \item\label{item:same_connected_comp} $i$ and $j$ belong to the same connected component of $G^\infty$ if and only if $i$ and $j$ belong to the same connected component of $\bar G^\infty$, the infinite flow graph of $\{\bar A(k)\}_{k=0}^\infty$.
\end{enumerate}
\end{corollary}

\begin{proof} 
$\{A(k)\}_{k=0}^\infty$ having feedback property implies that $\{\E[A(k)]\}_{k=0}^\infty$ is strongly aperiodic~\cite{touri2013product}. Since $\{\E[A(k)]\}_{k=0}^\infty$ is also approximately reciprocal, we know from Theorem~\ref{thm:main} that $\{\E[A(k)]\}_{k=0}^\infty\in\pstar$. {Equivalently,} $\{A(k)\}_{k=0}^\infty\in \pstar$ {by our definition of Class $\pstar$ for independent random chains}. Assertion~\ref{item:inf_flow_stable} now follows from  ~\cite[Theorem 2]{touri2011existence} and the remaining assertions follow from ~\cite[Theorem 5.1]{touri2012product}.
\end{proof}

\begin{remark}\label{remark:new_remark}
By Definition~\ref{def:inf_flow_stab}, Assertion~\ref{item:inf_flow_stable} of Corollary~\ref{cor:inf_flow_stable} implies the following: if $\{A(k)\}_{k=0}^\infty$ is either {(a)} a {strongly aperiodic and approximately reciprocal} deterministic chain, or {(b)} an independent random chain with feedback property {such that $\{\E[A(k)]\}_{k=0}^\infty$ is approximately reciprocal}, then $\lim_{k\rightarrow\infty}A(k:k_0)$ exists (\textit{a.s.}) for all $k_0\in\N_0$. In light of Assertions~\ref{item:mutual_ergodic} and~\ref{item:same_connected_comp}, this further implies that, for any two indices $i,j\in[n]$ and an arbitrary starting time $k_0\in\N_0$, the event that the $i$-th row of $\lim_{k\to\infty} A(k:k_0)$ equals the $j$-th row of $\lim_{k\to\infty} A(k:k_0)$ almost surely equals the event that $i$ and $j$ belong to the same connected component of {$ G^\infty$}.
\end{remark}

\subsection{Rate of Convergence to Steady State}

We now provide a result on the rate of convergence for the dynamics $x(k+1) = A(k)x(k)$ in terms of the quadratic comparison function $V_u(x)=\sum_{i=1}^{m} u_{i}\left(x_{i}-u^{T} x\right)^{2}$, where $u$ is an arbitrary stochastic vector in $\R^n$.

\begin{proposition}
\label{prop:conv_rate}
    Let $\{A(k)\}_{k=0}^\infty$ be an independent random chain with feedback property and feedback coefficient $\gamma>0$, and suppose the expected chain $\{\bar A(k)\}_{k=0}^\infty$ is approximately reciprocal. In addition, let $k_q=0$ for $q=0$ and let
$$
    k_{q}=\underset{t \geq k_{q-1}+1}{\operatorname{argmin}} \operatorname{Pr}\left(\min _{S \subset[n]} \sum_{t=k_{q-1}}^{t-1} \allone_n^T A_{S}(k) \allone_n \geq \delta\right) \geq \epsilon
$$
for all $q\geq 1$. Then, for all $q\geq 1$ and all stochastic vectors $u\in\R^n$,
$$
\E\left[V_u\left(x\left(k_{q}\right), k_{q}\right)\right] \leq\left(1-\frac{\epsilon \delta(1-\delta)^{2} \gamma p^{*}}{(m-1)^{2}}\right)^{q} \E[V_u(x(0), 0)].
$$
\end{proposition}

\begin{proof}
We can repeat the arguments used in the proof of Corollary~\ref{cor:inf_flow_stable} to show that $\{\E[A(t)]\}_{t=0}^\infty\in\pstar$. Therefore, this result is a straightforward consequence of Theorem~\ref{thm:main} above, Theorem 5.2 of~\cite{touri2012product}, and the implication that $\{\E[A(t)]\}_{t=0}^\infty$ has the strong feedback property.
\end{proof}

\subsection{Implications for Sonin's Jet Decomposition}

For a stochastic chain to be ergodic, it is necessary for the chain to possess a property called the \textit{infinite jet-flow} property~\cite{bolouki2015consensus}. In this subsection, our aim is to connect the concept of approximate reciprocity with the infinite jet-flow property and also with  the related concept of Sonin's jet decomposition~\cite{bolouki2015consensus,sonin2008decomposition}{, which we introduce in Proposition~\ref{prop:sonin}}. We first define the concept of \textit{jets}, first introduced in~\cite{bolouki2015consensus}.
 
 For any set $S\subset[n]$, we say that a sequence of sets $\{J(k)\}_{k=0}^\infty$ that satisfies $J(k)\subset S$ for all $k\in\N_0$ is a \textit{jet} in $S$. On the basis of this, we say that a tuple of jets $(J_1(k),J_2(k),\ldots, J_q(k) )$ is a jet-partition of $[n]$ if $\bigcup_{\ell\in[q] }J_\ell(k)=[n]$ and $J_r(k)\cap J_s(k)=\emptyset$ for all $r\ne s$ and $k\in\N_0$. In addition, for a jet $J$, the \textit{jet limit} $J^*$ denotes $\lim_{k\to\infty} J(k)$ if
it exists, in the sense that the set $J(k)$ becomes constant after a finite period of time. 


We now have the following result.
\def\Iess{I_{\text{ess}}}

\begin{proposition}\label{prop:sonin}
Let $\{A(k)\}_{k=0}^\infty$ be a strongly aperiodic and an approximately reciprocal stochastic chain, and let its infinite flow graph $G^\infty$ have $c$ connected components with vertex sets $\{J_\ell^*:\ell\in[c]\}$. Then $\{J_\ell^*:\ell\in[c]\}$ constitute the jet limits in Sonin's jet decomposition~\cite{sonin2008decomposition} of $\{A(k)\}_{k=0}^\infty$. {Equivalently,} there exists a jet-partition $(J_0(k), J_1(k),\ldots, J_c(k))$ of $[n]$ such that the following assertions hold:
\begin{enumerate}
    \item $\{J_\ell^*:\ell\in[c]\}$ are the jet limits of $\{J_\ell(k):\ell\in[c]\}$.
    \item For every $\ell\in [c]$, there exists a constant $\pi_\ell^*$ {such that}
    $
        {\lim _{k \rightarrow \infty} \sum_{i \in J_\ell(k)} \pi_i(k)=\pi_\ell^*.}
    $
    \item For every $\ell\in [c]$ and $(k_0,x_0)\in \N_0\times\R^n$, there exists a {scalar}  $x_\ell^*(k_0,x_0)\in\R$ such that 
    ${
        \lim_{k\to\infty} \left(A(k:k_0)x_0 \right)_{i_k}=x_\ell^*(k_0,x_0)}
    $
    for every sequence $\{i_k\}_{k=0}^\infty$ such that $ i_k\in J_\ell(k)$ for all ${k\in\N_0}$. 
    \item The total \emph{flow} between any two jets is finite, i.e.,
    \begin{align*}
        \sum_{k=0}^\infty\bigg[&\sum_{i \in J^r(k)} \sum_{j \in J^s(k+1)} \pi_j(k+1)a_{ji}(k)\cr
        &+\sum_{i \in J^r(k)} \sum_{j \in J^s(k+1)} \pi_j(k+1)a_{ji}(k)\bigg]<\infty
    \end{align*}
    for all distinct $r,s\in[c]\cup\{0\}$.
    \item $\lim _{k \rightarrow \infty} \sum_{i \in J^0(k)} \pi_i(k)=0$.
\end{enumerate}
\end{proposition}

\begin{proof}
The fact that $\{J_k:k\in[c]\}$ constitute the jet limits in Sonin's decomposition follows  from Theorem 1, Corollary 3,  ~\cite[Theorem 4] {bolouki2015consensus}, and from the fact that strong aperiodicity implies weak aperiodicity. Assertions 1 - 5 now follow from {Sonin's definition of jet decomposition}~\cite[Theorem 1] {sonin2008decomposition}.
\end{proof}

\subsection{Generalized Deffuant-Weisbuch Dynamics}

So far, we have analyzed state-independent dynamics, i.e., dynamics for which the state evolution matrix $A$ (or its expectation $\bar A$) is a function only of the time $k$ and not of the state $x(k)$. To show that our main results can also be applied to state-dependent dynamics, 
we now consider a generalization of \textit{Deffuant-Weisbuch} dynamics~\cite{deffuant2001mixing}, a model of opinion dynamics that incorporates \textit{bounded confidence}, which is the notion that individuals in a social network influence each other's opinions only if they are similarly opinionated.

To this end, consider a social network of $n$ individuals with arbitrary initial opinions $\{x_i(0)\}_{i=1}^n$. At each time step, a pair of {distinct} agents $i,j \in [n]$  is chosen randomly with a constant probability $q_{ij}>0$ (and hence, $\sum_{1\le i<j\le n }q_{ij}=1$). The agents update their opinions if and only if the difference between their current opinions is no greater than a constant confidence threshold {$r_{ij}=r_{ji}>0$}.  {Precisely, the agent pair chosen at time $k$, which we denote by $(\ell(k),m(k))$,} update their opinions as 
\begin{align}\label{eq:deffuant}
    x_{\ell(k)} (k+1) &= \alpha(k)x_{\ell(k)} (k) + (1-\alpha(k))x_{m(k)} (k),\cr
    x_{m(k)} (k+1) &= (1-\beta(k))x_{\ell(k)} (k) + \beta(k)x_{m(k)} (k)
\end{align}
if {$|x_{\ell(k)} (k)-x_{m(k)} (k)|\leq r_{\ell(k)\, m(k) }$}, where $\alpha(k), \beta(k)\in (0,1)$ are random self-weights, and the agents' opinions remain the same if {$|x_{\ell(k)} (k)-x_{m(k)} (k)|> r_{\ell(k)\, m(k) }$}. This model is  more general than the classical Deffuant-Weisbuch model, which assumes that $i$ and $j$ are chosen \textit{uniformly} at random  and that there exists a constant $\mu>0$ such that $\alpha_i(k)=\mu$ for all $i\in[n]$ and $k\in\N_0$.

We now provide a convergence result for the above model. This result applies to the scenario in which all the agents' self-weights are almost surely ``moderate'' in that they are {(i) always above a constant positive threshold, and (ii)} uniformly bounded away from $1$ throughout any given time interval, except possibly for a sub-interval of bounded duration.

\begin{proposition}\label{prop:deffuant_weisbuch} Consider the generalized Deffuant-Weisbuch dynamics defined above. Suppose there exists a constant ${\delta\in (0,1)}$ such that  { {$\min\{\alpha(k), \beta(k)\}\ge \delta$ \textit{a.s.}} for all $k\in\N_0$. In addition, suppose there exist constants $\varepsilon\in (0,1-\delta)$} and $B<\infty$ such that for any two times $k_1,k_2\in\N_0$ with $k_2-k_1>B$, there exists a set of times $I\subset \{k_1,\ldots, k_2\}$ with $|I|\geq k_2-k_1-B$ such that $\max\{\alpha(k),\beta(k)\}\le 1-\varepsilon$ \textit{a.s.} for all $k\in I$. Then $\lim_{k\to\infty} x(k)$ exists \textit{a.s.} for all initial conditions $(k_0, x(k_0))\in \N_0\times \R^n$.
\end{proposition}
{
The proof of Proposition~\ref{prop:deffuant_weisbuch} is relegated to Appendix~\ref{app:C}.}

\subsection{Some Other Applications}

Below we discuss a few other applications of our  results.
\begin{enumerate}
    \item \textit{Multiple Consensus:} We say that {multiple consensus}~\cite{bolouki2014geometric}  occurs whenever $\lim_{t\to\infty} x(t)$ exists but is not necessarily a multiple of the consensus vector $\allone_n$, meaning that different entries of $x(t)$ may or may not converge to different limits. An immediate consequence of Theorem~\ref{thm:ct_main} above and Theorem 2 of~\cite{bolouki2014geometric} is that multiple consensus always occurs in the continuous-time dynamics $\dot x(t)=A(t)x(t)$ if  $\{A(t)\}_{t\geq 0}$ is an approximately reciprocal chain that satisfies Assumption~\ref{assume:uniform_bound}.
    \item \textit{\'Eminence Grise Coalitions:} An \'eminence grise coalition (EGC,~\cite{bolouki2015eminence}) is a subset of the total agent population that has the ability to steer the opinions of all the individuals in the network to a desired consensus asymptotically in time. A direct consequence of Theorem~\ref{thm:ct_main} above and Corollary 3 of~\cite{bolouki2015eminence} is as follows: if $\{A(t)\}_{t\geq 0}$ is an approximately reciprocal chain satisfying Assumption~\ref{assume:uniform_bound}, then the size of a minimal EGC of a network with dynamics $\dot x(t)=A(t)x(t)$ is the number of connected components in the infinite flow graph of $\{A(t)\}_{t\geq 0}$.
    \item \textit{Distributed Optimization:} A typical distributed optimization framework consists of a network of $n$ interacting agents with the common objective of minimizing the sum of $n$ convex functions $\{f_i:\R^d\rightarrow\R^d \}_{i=1}^n$ subject to the constraint that for each $i\in[n]$, the function $f_i$ is known only to agent $i$. Notably,~\cite{touri2015continuous} provides a continuous-time algorithm for distributed optimization without requiring the associated stochastic chain $\{A(t)\}_{t\geq 0}$ to be \textit{cut-balanced}~\cite{touri2013product}. However, the results therein are based on an assumption involving an abstract concept called \textit{Class $\pstar$ flows}, the interpretation of which is aided significantly by results such as  Theorem~\ref{thm:ct_main}.
    \item \textit{Distributed Learning/Hypothesis Testing:} In a typical distributed learning scenario, there is a set of possible states of the world, of which a subset of states are true. In addition, there is a network of interacting agents whose common objective is to learn the identity of the true state through mutual interaction as well as by performing private measurements on the state of the world. Our prior work~\cite{parasnis2022non} generalizes certain known results on distributed learning to networks described by random, independently distributed time-varying directed graphs. Importantly, the sequence of weighted adjacency matrices of all the networks considered therein are assumed to belong to Class $\pstar$. Hence,  along with the concept of Class $\pstar$ for independent random chains, Theorem~\ref{thm:main} significantly facilitates our interpretation of the main results of~\cite{parasnis2022non}.
\end{enumerate}

\section{Conclusion}\label{sec:conclusion}

We extended two eigenvector assertions of the classical Perron-Frobenius theorem to sequences as well as continua of row-stochastic matrices that satisfy the mild assumption of strong aperiodicity. In the process, we established approximate reciprocity as an equivalent characterization of Class $\pstar$, a special but broad class of stochastic chains that subsumes a few important sub-classes such as cut-balanced (instantaneously reciprocal) chains, doubly stochastic chains, and uniformly strongly connected chains~\cite{touri2013product}. We then discussed a few applications of our main results to problems in distributed learning, distributed averaging, opinion dynamics, etc.

{Exploring the connections between Theorems~\ref{thm:main} -~\ref{thm:ct_uniqueness} and other extensions of the Perron-Frobenius theorem, in particular the Krein-Rutman theorem~\cite{krein1948linear}, is a very interesting direction for future research.} 
We would also like to extend our results to dependent random chains in order to  study  random real-world phenomena. Finally, we will attempt to extend our results to sequences of non-negative matrices that are not necessarily row-stochastic, as this would result in a complete generalization of the eigenvector assertions of the Perron-Frobenius theorem to  time-varying matrices.

Nevertheless, we {expect our results} in their present form to find a significant number of applications other than those discussed above. This belief is rooted in the already wide applicability of the classical theorem.

\bibliography{bib}
\bibliographystyle{ieeetr}

\section*{Appendices}

\section{Auxiliary Lemmas}
\begin{lemma}\label{lem:exponential} Let $\varepsilon\in (0,1)$ be given. Then $1-x\geq e^{-M(\varepsilon)x}$ for all $x\in [0,1-\varepsilon]$, where $M(\varepsilon):= \frac{1}{1-\varepsilon}\ln\frac{1}{\varepsilon}$.
\end{lemma}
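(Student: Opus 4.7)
The plan is to reduce the claim to showing a concave function is nonnegative on a closed interval where it vanishes at both endpoints. First, since $x\in[0,1-\varepsilon]$ gives $1-x\in[\varepsilon,1]>0$, I can take logarithms and observe that the desired inequality $1-x\ge e^{-M(\varepsilon)x}$ is equivalent to
\[
    g(x)\;:=\;M(\varepsilon)\,x+\ln(1-x)\;\ge\;0\qquad\text{for all }x\in[0,1-\varepsilon].
\]
So the problem becomes a one-variable inequality for $g$ on this interval.

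Next, I would check the two endpoints. Clearly $g(0)=0$. At the right endpoint, by the definition of $M(\varepsilon)$,
\[
    g(1-\varepsilon)\;=\;\frac{1}{1-\varepsilon}\ln\frac{1}{\varepsilon}\cdot(1-\varepsilon)+\ln\varepsilon\;=\;\ln\frac{1}{\varepsilon}+\ln\varepsilon\;=\;0.
\]
Thus $g$ vanishes at both endpoints of $[0,1-\varepsilon]$.

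Then I would invoke concavity. Since $g''(x)=-\tfrac{1}{(1-x)^{2}}<0$ on $[0,1-\varepsilon]\subset[0,1)$, the function $g$ is strictly concave on this interval. A strictly concave function that is nonnegative (in fact zero) at both endpoints of a closed interval is nonnegative throughout the interval, so $g(x)\ge 0$ on $[0,1-\varepsilon]$, which is the equivalent form of the claim. (Equivalently, one can note that $M(\varepsilon)$ is precisely the slope of the secant line joining $(0,0)$ and $(1-\varepsilon,-\ln(1-(1-\varepsilon)))$ on the graph of $x\mapsto -\ln(1-x)$, and that this convex function lies below its secants.)

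There is essentially no obstacle here: once the statement is rephrased via logarithms, the role of $M(\varepsilon)$ is exactly to match the secant slope that witnesses the concavity bound, and the proof is a direct two-line calculus check. The only thing to be slightly careful about is ensuring $x=1$ is excluded so that $\ln(1-x)$ is defined, which is guaranteed by $x\le 1-\varepsilon<1$.
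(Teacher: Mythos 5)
Your proof is correct and follows essentially the same argument as the paper's: the paper applies the identical concavity-plus-endpoint-vanishing argument to $f(x)=1-x-e^{-M(\varepsilon)x}$ directly, while you apply it to the log-transformed $g(x)=M(\varepsilon)x+\ln(1-x)$; this is a cosmetic difference only.
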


\begin{proof}
Let $f:[0, 1-\varepsilon]\rightarrow\R$ be defined by $f(x)=1-x-e^{-M(\varepsilon)x}$. Then $f(0)=0$. Next, note that $f''(x)=-M(\varepsilon)^2e^{-M(\varepsilon) x}<0$ for all $x\in[0,1-\varepsilon]$, implying that $f$ is concave on its domain. Also, observe that $f(1-\varepsilon)=0$. Therefore, by Jensen's inequality, for any $x\in[0, 1-\varepsilon]$, we have $f(x)=f\left(\frac{x}{1-\varepsilon}(1-\varepsilon)+\left(1-\frac{x}{1-\varepsilon}\right)\cdot 0\right)
    \geq \frac{x}{1-\varepsilon} f(1-\varepsilon) + \left(1-\frac{x}{1-\varepsilon}\right)f(0)=0.$
\end{proof}

\begin{lemma}\label{lem:obv} Suppose $G^\infty=([n],E^\infty)$, the infinite flow graph of $\{A(k)\}_{k=0}^\infty$, is connected. Then $\{A(k)\}_{k=0}^\infty$ is ergodic if it is infinite flow stable.
\end{lemma}

\begin{proof}
As $G^\infty$ is connected, for all $i,j\in[n]$  there exists a path between $i$ and $j$ in $G^\infty$, i.e., there exists an $r\in[n]$ and vertices $\ell_1,\ell_2,\ldots, \ell_r\in [n]$ with $\ell_1=i$ and $\ell_r=j$ such that $(\ell_1,\ell_2), (\ell_2,\ell_3) \ldots, (\ell_{r-1},\ell_r)\in E^\infty$. As $\{A(k)\}_{k=0}^\infty$ is also infinite flow stable, this implies $\lim_{k\rightarrow\infty} (x_{\ell_k}(k)-x_{\ell_{k+1}}(k)) = 0$ for all $k\in[r-1]$. As a result, 
$\lim_{k\to\infty}(x_i(k)-x_j(k))=0$. As $i$ and $j$ are arbitrary, it follows from~\cite[Theorem 2.2]{touri2012product} that $\{A(k)\}_{k=0}^\infty$ is ergodic.
\end{proof}
\section{Proof of Lemma~\ref{lem:uncertain}}\label{app:A}
\begin{proof}
We define $N=|\{k\in\{0,\ldots,\sigma-1\}:B_{ji}(k)>0\}|$ and use induction on $N$. For $N=1$, we have $B_{ji}(k_L)\geq \delta$ and hence the following, which verifies the lemma:
\begin{align} \label{eq:re-referenced}
     B_{ji}(\sigma:0)\geq B_{jj}(\sigma:k_L+1)B_{ji}(k_L)B_{ii}(k_L:0)\geq \eta_j \delta\eta_i.
\end{align}

Now, suppose the lemma holds when $N=N_0$ for some $N_0\in\N$, and consider $N=N_0+1$. We define $\varepsilon:=B_{ji}(k_L)$, and consider two cases. If $\varepsilon\geq \delta$, i.e., $B_{ji}(k_L)\geq \delta$, then \eqref{eq:re-referenced} still holds, thereby proving the lemma. On the other hand, if $\varepsilon<\delta $, then we let $\tilde B(k):=B(k)$ for each $k\in\{0,\ldots,\sigma-1\}\setminus\{k_L\}$, and $\tilde{B}(k_L):=B(k_L) - B_{ji}(k_L) e_je^T_i$. 
Therefore, $\{\tilde B(k)\}_{k=0}^{\sigma-1}$ is a sequence of substochastic matrices satisfying $|\{k\in\{0,\ldots,\sigma-1\}:\tilde B_{ji}(k)>0\}|=N_0$. 

Next, we have $\tilde B_{ii}(k_1:k_0)=B_{ii}(k_1:k_0)\geq \eta_i\text{ whenever } 0\leq k_0\leq k_1\leq k_L$. Since the definitions of $k_L$ and $\{\tilde B(k)\}_{k=0}^{\sigma-1}$ imply that $\tilde k_L:=\max\{k\leq \sigma-1 :\tilde B_{{ji}}(k)>0\}<k_L$, it follows that $\tilde B_{ii}(k_1:k_0)\geq \eta_i$ whenever $0\leq k_0\leq k_1\leq \tilde k_L$. Next, note that for all $k_0, k_1$ satisfying $0\leq k_0\leq k_1\leq \sigma$ and $\{k_0, \ldots, k_1-1\}\not\ni k_L$, we have $\tilde B_{jj}(k_1:k_0)=B_{jj}(k_1:k_0)\geq \eta_j$ whereas for all $k_0,k_1$ satisfying $0\leq k_0\leq k_L< k_1\leq \sigma$, we have $\tilde B_{jj}(k_1:k_0) 
    = B_{jj}(k_1:k_0) - B_{jj}(k_1:k_L+1)B_{ji}(k_L)B_{ij}(k_L:k_0)
    \geq \eta_j - \varepsilon$
because the substochasticity of $\{B(k)\}$ implies that $\max\{B_{jj}(k_1:k_L+1), B_{ij}(k_L:k_0)\}\leq 1$. Moreover, $\sum_{k=0}^{\sigma -1}\tilde B_{ji}(k) = \sum_{k=0}^{\sigma -1}\tilde B_{ji}(k) - B_{ij}(k_L) \geq \delta - \varepsilon$. Thus, \( \tilde B_{ii}(k_1:k_0) \geq \eta_i \) if \( 0 \leq k_0 \leq k_1 \leq \tilde k_L \), \( \tilde B_{jj}(k_1:k_0) \geq \tilde \eta_j \) if \( 0 \leq k_0 \leq k_1 \leq \sigma \), and \( \sum_{k=0}^{\sigma-1} B_{ji}(k) \geq \tilde \delta \), where $\tilde \eta_j:=\eta_j - \varepsilon > \delta - \varepsilon > 0$ and $\tilde \delta :=\delta - \varepsilon \in (0, \tilde\eta_j)$. Therefore, by our inductive hypothesis, we have $\tilde B_{ji}(\sigma:0)\geq \frac{1}{2}\eta_i \tilde \eta_j \tilde \delta = \frac{1}{2} \eta_i (\eta_j - \varepsilon) (\delta-\varepsilon) $.

Now, observe that
\begin{align*}
    &B_{ji}(\sigma:0)\cr
    &= \tilde B_{ji}(\sigma:0) + B_{jj}(\sigma:k_L+1)B_{ji}(k_L)B_{ii}(k_L:0)\cr
    &\geq \frac{1}{2} \eta_i (\eta_j - \varepsilon) (\delta-\varepsilon) + \eta_j \varepsilon \eta_i\cr
    &=\frac{1}{2}\eta_i\varepsilon^2 + \frac{1}{2}\eta_i (\eta_j - \delta) \varepsilon + \frac{1}{2}\eta_i\eta_j\delta \stackrel{(a)}\geq \frac{1}{2}\eta_i\eta_j\delta,
\end{align*}
where $(a)$ holds because $\varepsilon>0$ and $\eta_j>\delta$. The lemma thus holds for $N=N_0+1$ and hence, for all $N \leq \sigma$.
\end{proof}

\section{Proof of Proposition~\ref{prop:main}}\label{app:B}
\begin{proof}
We use induction on $n$, the matrix dimension. Consider $n=1$, suppose that $\gamma,p_0\in(0,1),\beta\in(0,\infty)$ and $\Delta\in[0,\infty)$ are given, and let $\{A(k)\}_{k=0}^\infty=\{a(k)\}_{k=0}^\infty$ be a sequence of real numbers satisfying the three properties required by the proposition. Then, by the feedback property of the chain, $\{a_k\}_{k=0}^\infty$ is a sequence of scalars in $[\gamma,1]$. Let $\bar a_k:=1-a_k$ for each $k\in\N_0$. Then $\bar a_k\in [0, 1-\gamma]$ for all $k\in\N_0$, and
$
\sum_{k=0}^\infty
\bar a_k\leq \Delta$ by approximate stochasticity. Hence, for any given $k_0,k_1\in \N_0$ satisfying $k_0\leq k_1$,
\begin{align*}
    &A(k_1:k_0)=\prod_{k=k_0}^{k_1-1}(1-\bar a_k)
    \stackrel{(a)}{\geq} \prod_{k=k_0}^{k_1-1} e^{-M(\gamma)\bar a_k}\cr
    &=e^{-M(\gamma)\sum_{k=k_0}^{k_1-1}\bar a_k}
    \geq e^{-M(\gamma)\sum_{k=0}^{\infty}\bar a_k}\geq e^{-M(\gamma)\Delta}>0,
\end{align*}
where (a) is a consequence of Lemma \ref{lem:exponential}. Thus, we may set $\eta_1(\gamma,p_0,\beta,\Delta)=e^{-M(\gamma)\Delta}$. {As $M$ is a continuous function}, this proves the proposition for $n=1$.

Now, suppose the proposition holds for all $n\leq q$ for some $q\geq 1$, and consider $n=q+1$. We again suppose that $\gamma,p_0,\beta$ and $\Delta$ are given, and let $\{A(k)\}_{k=0}^\infty$ be a substochastic chain in $\R^{n\times n}$ satisfying the required properties. For each $k\in\N_0$, let $v(k):=\allone_n-A(k)\allone_n$ and $v_{\max}(k):=\max_{i\in [n]} (v(k))_i$. Observe that the feedback property and the substochasticity of $A(k)$ together imply that $\mathbf 0_n\leq v(k)\leq (1-\gamma)\allone_n$  for all $k\in\N_0$. We also observe that $A(k)\allone_n\geq (1-v_{\max}(k))\allone_n$ for all $k\in\N_0$. Therefore, for all $0\leq k_0\leq k_1<\infty$, we have
\begin{align}\label{eq:matrix_prod_ineq}
    & A(k_1:k_0)\allone_n=A(k_1-1)\cdots A(k_0+1) A(k_0)\allone_n\cr
    &\geq A(k_1-1)\cdots A(k_0+1)(1-v_{\max}(k_0))\allone_n\cr &\stackrel{(a)}{\geq} \left(\prod_{k=k_0}^{k_1-1}(1-v_{\max}(k) )\right)\allone_n\stackrel{(b)}{\geq} e^{-M(\gamma)\sum_{k=k_0}^{k_1-1}v_{\max}(k) }\allone_n\cr
    &\geq e^{-M(\gamma)\sum_{k=k_0}^{k_1-1}\allone_n^T v(k)}{\allone_n}\stackrel{(c)}{\geq} e^{-M(\gamma)\Delta}\allone_n,
\end{align}
where (a) can be easily shown by induction, (b) is obtained by a repeated application of Lemma \ref{lem:exponential}, and (c) follows from the approximate stochasticity of the chain. 

We now construct two chains of substochastic matrices with dimensions smaller than $n$ and apply our inductive hypothesis to the resulting chains. Let $\{\tau_0,\tau_1,\tau_2,\ldots, \tau_n\}\subset\N\cup\{\infty\}$ be the times defined by $\tau_0:=k_0$ and
$
    \tau_l:=\inf\left\{\tau\geq \tau_{l-1}:\min_{T\subset[n]}\sum_{k=\tau_{l-1}}^{\tau-1}\allone_{|T|} ^T A_{T\bar T}(k)\allone_{|\bar T|}  >  1\right\},
$
{so that $\{ \{\tau_{\ell-1},\ldots,\tau_{\ell}\}:\ell\in [n]\}$ are the shortest consecutive intervals over which the influence of any subset $T\subset [n]$ on the complementary set $\bar T$ exceeds a fixed positive threshold (chosen to be $1$ for simplicity). As we show later in this proof, each of these $n$ intervals corresponds to an $n\times n$ irreducible matrix with positive diagonal entries. The product of $n-1$ or more such matrices is positive~\cite{4570093} -- a  fact that we will use to show that the backward matrix product of $\{A(k)\}_{k=0}^\infty$ over $[\tau_0, \tau_n]$ is entry-wise lower-bounded by a positive matrix. }

Now, let $m=\max\{s:\tau_s<\infty\}$ so that $\tau_s=\infty $ if and only if $s>m$, and consider any $s\in\{0,1,\ldots, \min\{m, n-1\} \}$. Then, by the definition of $\tau_{s+1}$, there exists at least one set $T\subset [n]$ such that $\sum_{k=\tau_s}^{\tau_{s+1}-2} \allone_n^T A_{T \bar T}(k)\allone_n \leq 1$ (note that this also holds if $m\leq n-1$ and $s=m$, in which case $\tau_{s+1}-2=\infty$). We choose any one such set $T$ and assume that $T=[|T|]$ w.lo.g. We accordingly define the chains $\{B(k)\}_{k=\tau_s}^\infty$ and $\{C(k)\}_{k=\tau_s}^\infty$ as 
\begin{gather*}
    B(k) =
    \begin{cases}
    A_T(k) &\, \text{if } \tau_s\leq k\leq \tau_{s+1}-1,\\
    I_{|T|} &\,\text{otherwise},
    \end{cases}\\
    C(k) =
    \begin{cases}
    A_{\bar T}(k) \, \text{if } \tau_s\leq k\leq \tau_{s+1}-1,\\
    I_{|\bar T|} \,\text{otherwise}.
    \end{cases}
\end{gather*}
Now, the definition of $T$ implies  $\sum_{k=\tau_s}^{\tau_{s+1}-1}\allone_{|T|}^T A_{T \bar T}(k)\allone_{|\bar T|} \leq 1+n\leq 2n$. Due to approximate reciprocity, it follows that $\sum_{k=\tau_s}^{\tau_{s+1}-1}\allone_{|\bar T|}^T A_{\bar T T}(k)\allone_{|T|} \leq \frac{2n+\beta}{p_0}$. Note that the inequality $\sum_{k=\tau_s}^{\tau_{s+1}-1}\allone_{|T|}^T A_{T\bar T}(k)\allone_{|\bar T|} \leq 2n$ also implies that $\sum_{k=\tau_s}^{\tau_{s+1}-1} \allone_n^T(\allone_{| T|}-A_T(k)\allone_{|T|})
    =  \sum_{k=\tau_s}^{\tau_{s+1}-1} \allone_{|T|}^T (A_{ T \bar T}(k)\allone_{|\bar T|} + v_T(k)) \leq 2n + \Delta.$
Similarly, $\sum_{k=\tau_s}^{\tau_{s+1}-1}\allone_{|\bar T|} ^T A_{\bar T T}(k)\allone_{|T|} \leq \frac{2n+\beta}{p_0}$ implies that
${\sum_{k=\tau_s}^{\tau_{s+1}-1} \allone_{|\bar T|}^T(\allone_n-A_{\bar T}(k)\allone_{|\bar T|})\leq \frac{2n+\beta}{p_0}+\Delta} 
$.
Therefore, $\{A_T(k)\}_{k=\tau_s}^{\tau_{s+1}-1}$ and $\{A_{\bar T}(k)\}_{k=\tau_s}^{\tau_{s+1}-1}$ are both approximately stochastic sequences. It follows that $\{B(k)\}_{k=\tau_s}^\infty$ and $\{C(k)\}_{k=\tau_s}^\infty$ are also approximately stochastic. 

Next, for any subset $U\subset T$, let $\bar U:=[n]\setminus U$ and {${ U^c:=T\setminus U}$}. Then $\{A(k)\}_{k=0}^\infty$, being approximately reciprocal, satisfies
\begin{align*}
& p_0 \sum_{k=k_0}^{k_1-1} \mathbf{1 }_{| U^c| }^T A_{U^c U}(k) \mathbf{1 }_{| U| } \leq p_0 \sum_{k=k_0}^{k_1-1} \mathbf{1 }_{| \bar U| }^T A_{\bar{U} U}(k) \mathbf{1 }_{| U| } \\
& \leq \sum_{k=k_0}^{k_1-1} \mathbf{1 }_{| U| }^T A_{U \bar{U}}(k) \mathbf{1 }_{| \bar U| }+\beta \\
& =\sum_{k=k_0}^{k_1-1} \mathbf{1}_{| U|} ^T A_{U {U}^c}(k) \mathbf{1}_{| U^c| } +\sum_{k=k_0}^{k_1-1} \mathbf{1 }_{| U| }^T A_{U \bar{T}}(k) \mathbf{1 }_{| \bar T| }+\beta \\
& \leq \sum_{k=k_0}^{k_1-1} \mathbf{1 }_{| U^c| }^T A_{U U^c}(k) \mathbf{1}_{| U^c| } +\sum_{k=k_0}^{k_1-1} \mathbf{1}_{| T| }^T A_{T \bar{T}}(k) \mathbf{1}_{| \bar T| }+\beta \\
& \leq \sum_{k=k_0}^{k_1-1} \mathbf{1}_{| U| } ^T A_{U {U}^c}(k) \mathbf{1}_{| U^c| }+2 n+\beta
\end{align*}
whenever $\tau_s\leq k_0\leq k_1\leq  \tau_{s+1}$. Since  $\allone_{| U|}^T B_{U U^c}(k)\allone_{| U^c|} =0$ for all $U\subset T$ and $k\geq \tau_{s+1}$, it follows that
$$
    p_0\sum_{k=k_0}^{k_1-1}\allone_{| U^c|}^T B_{U^c U}(k)\allone_{|U|} \leq \sum_{k=k_0}^{k_1-1}\allone_{|U|}^T B_{U U^c}(k)\allone_{|U^c|}+2n+\beta
$$
for all $\tau_s\leq k_0\leq k_1<\infty$. 
This shows that $\{B(k)\}_{k=\tau_s}^{\infty}$ is approximately reciprocal (though one of the associated constants is $\beta+2n$ instead of $\beta$). We can similarly show that $\{C(k)\}_{k=0}^\infty$ is also approximately reciprocal. It can be easily seen that these two sequences also possess the feedback property. Hence, by our inductive hypothesis, there exist positive constants 
\begin{align*}
    \eta_B &:=\min_{r\in [n-1]}\eta_{r}(\gamma,p_0,\beta+2n,\Delta+2n)\cr \text{and } \eta_C &:= \min_{r\in[n-1]}\eta_{r}\left (\gamma,p_0,\beta+\frac{2n+\beta}{p_0},\Delta+\frac{2n+\beta}{p_0}\right)
\end{align*}
such that $B(k_1:k_0)\geq \eta_B I_{ T}$ and $C(k_1:k_0)\geq \eta_C I_{|\bar T|}$ for all $k_0,k_1\in\N_0$ satisfying $\tau_s\leq k_0\leq k_1\leq \tau_{s+1}$. By noting that $A_T(k_1:k_0)\geq B(k_1:k_0)$ and $A_{\bar T}(k_1:k_0)\geq C(k_1:k_0)$, we observe that $A(k_1:k_0)\geq \eta_{\min} I_n$ for all $\tau_s\leq k_0\leq k_1\leq \tau_{s+1}$, where $\eta_{\min}:=\min\{\eta_B,\eta_C\}$. Note that this is true for all $s\in\{0,\ldots,\min\{m, n-1\}\}$ and that the value of $\eta_{\min}$ is independent of $s$.

We now consider two cases.

\textbf{Case 1: $m<n$.} In this case, $\tau_{m+1}$ is defined and it equals $\infty$.  Hence, there exists an $s\in\{0,1,\ldots, m\}$ such that $\tau_s\leq k_1\leq \tau_{s+1}$. Therefore,
\begin{align*}
    A(k_1:k_0) &= A(k_1:\tau_s)\cdot A(\tau_s:\tau_{s-1})\cdots A(\tau_1:\tau_0)\cr
    &\geq \eta_{\min}^{s+1} I_n \geq \eta_{\min}^n I_n.
\end{align*}

\textbf{Case 2: $m=n$.} In this case, $\tau_n<\infty$, so we either have $k_1\leq \tau_n$ or $k_1> \tau_n$.

If $k_1\leq \tau_n$, then there exists an $s\in\{0,1,\ldots,n-1\}$ such that $\tau_s\leq k_1\leq \tau_{s+1}$. Hence, we can proceed as in Case 1. Otherwise, if $k_1>\tau_n$, we need the following analysis.

For each $s\in\{0,1,\ldots,n-1\}$, let $\mathcal G^{(s)}$ be the directed graph whose adjacency matrix $W^{(s)}$ {has entries} given by
$$
    {w}_{ij}^{(s)}=
    \begin{cases}
        1,& \text{if }i {\, = \,} j\text{ {or} } \sum_{k=\tau_{s}}^{\tau_{s+1}-1} {A_{ij}(k)}\geq \frac{1}{n^2},\\
     0,              & \text{otherwise}
    \end{cases},
$$
for all $i,j\in [n]$. We now claim that $\mathcal G^{(s)}$ is a strongly connected digraph for each $s\in\{0,\ldots,n-1\}$. To prove this claim, suppose it is false for some $s\in\{0,\ldots,n-1\}$. Then, there exists a partition $\{T,\bar T\}$ of $[n]$ such that there is no directed link from any node in $T$ to any node in $\bar T$ in $\mathcal G^{(s)}$. This implies $\sum_{k=\tau_{s}}^{\tau_{s+1}-1} \allone_{|\bar T|} ^T A_{\bar T T}(k) \allone_{| T|} =  \sum_{i\in\bar T, j\in T}  \sum_{k=\tau_{s}}^{\tau_{s+1}-1} A_{ij}(k)< |\bar T|\cdot |T|\cdot \frac{1}{n^2}\leq 1$, 
which contradicts the definitions of the times $\tau_0,\ldots, \tau_{n-1}$, thereby proving the claim. Since the weighted adjacency matrix of a strongly connected digraph is irreducible, it follows that $W^{(s)}$ is an irreducible matrix for each $s\in\{0,\ldots,n-1\}$. {As $w_{ii}^{(s)}=1$ for all $i\in [n]$, $W^{(s)}$ is also a primitive matrix~\cite[Page 678]{meyer2000matrix} of the form $W^{(s)}=I_n+Y^{(s)}$, where $Y^{(s)}$ is non-negative for each $s\in\{0,\ldots,n-1\}$. It follows from~\cite{4570093} that $W^{(1)}\cdots W^{(n-1)}$ is positive. Hence, $W^{(0)}W^{(1)}\cdots W^{(n-1)} = (I_n + Y^{(0)})W^{(1)}\cdots W^{(n-1)}$ is a positive matrix.} 

{As a result,} for any two {distinct} indices $i,j\in [n]$, {we have $(W^{(0)}\cdots W^{(n-1)})_{ij}>0$. This implies that}   there exist $r\in [n]$, node indices $l_0\ne l_1\ne \cdots \ne l_r\in [n]$ with $l_0=i$ and $l_r=j$,  and time indices $0\leq s_1\leq s_2\leq \cdots \leq s_{r}\leq n-1$ such that ${w_{l_0 l_0}^{(0)}, \ldots, w_{l_0 l_0}^{(s_1-1)},} w_{l_0 l_1}^{(s_1)}, {w_{l_1 l_1}^{(s_1+1)}, \ldots, w_{l_1 l_1}^{(s_2-1)}}${,} $ w_{l_1 l_2}^{(s_2)}, \ldots,w_{l_{r-1} l_r}^{(s_{r})}{,w_{l_{r} l_r}^{(s_{r}+1)},\ldots, w_{l_r l_r}^{(n-1)}}$ are all positive. From the definition of $W^{(s)}$, it now follows that
\begin{align}\label{eq:many_lower_bound}
    &\sum_{k=\tau_{s_1}}^{\tau_{s_1+1}-1} {A_{il_1}(k)}\geq \frac{1}{n^2},\,\,  \sum_{k=\tau_{s_2}}^{\tau_{s_2+1}-1} {A_{l_1 l_2}(k)}\geq \frac{1}{n^2},\,\, \ldots,\,\, \cr
    &\ldots,\,\,\sum_{k=\tau_{s_{r}}}^{\tau_{s_{r}+1}-1} {A_{l_{r-1} j}(k)}\geq \frac{1}{n^2}.
\end{align}

Next, we bound $A_{l_{u-1} l_{u}}(\tau_{s_u+1}:\tau_{s_u})$ for all $u\in [r]$. On setting $\eta_i = \eta_j = \eta_{\min}$ and  $\delta=\min\{\frac{1}{n^2},\frac{\eta_{\min}}{2}\}$, and then applying Lemma \ref{lem:uncertain} to the sequence $\{A(k)\}_{k=\tau_{s_u}}^{\tau_{s_u+1}}$, we obtain $
    A_{l_{u-1} l_{u}}(\tau_{s_u+1}:\tau_{s_u})\geq \frac{1}{2}\eta_{\min}^2\delta.
$ for each $u\in [r]$.

Now, for any $y\in [n]$ and indices $0\leq s<t\leq m = n$, 
\begin{align}\label{eq:saved}
    A_{yy}(\tau_t:\tau_s)\geq \prod_{k=s}^{t-1}A_{yy}(\tau_{k+1}:\tau_k)\geq \prod_{k=s}^{t-1} \eta_{\min} \geq \eta_{\min}^n.
\end{align}
Thus, {we have the following for all distinct $i,j\in[n]$.}
\begin{align*}
    &A_{ij}(\tau_n:k_0)= A_{ij}(\tau_n:\tau_0)\cr
    &\geq A_{ii}(\tau_{s_1}:\tau_0)\cr
    &\quad\cdot A_{i,l_1}(\tau_{s_1+1}:\tau_{s_1}) A_{l_1 l_1}(\tau_{s_2}:\tau_{s_1+1}) A_{l_1 l_2}(\tau_{s_2+1}:\tau_{s_2})\cdots \cr
    &\quad\cdots A_{l_{r-1} l_{r-1}}(\tau_{s_{r}}:\tau_{s_r}) A_{l_{r-1} j} (\tau_{s_{r}+1}, \tau_{s_{r}}) A_{jj}(\tau_n: \tau_{s_{r}+1 })\cr
    &\geq \left(\eta_{\min}^n\cdot\frac{\eta_{\min}^2\delta}{2} \right)^{r} \eta_{\min}^n \geq \eta_D > 0,
\end{align*}
where $\eta_D:=\left(\eta_{\min}^n\cdot\frac{\eta_{\min}^2\delta}{2} \right)^{n} \eta_{\min}^n$. 
{On the other hand, if $i=j\in [n]$, then using~\eqref{eq:saved} yields $A_{ij}(\tau_n:k_0) =A_{ii}(\tau_n:\tau_0) \ge \eta_{\min}^n \ge \eta_D$.} We have {thus} shown that $A_{ij}(\tau_n:k_0)\geq \eta_D$ for all $i,j\in [n]$, i.e., $A(\tau_n:k_0)\geq \eta_D \allone \allone^T$. Now, \eqref{eq:matrix_prod_ineq} implies 
\begin{align}
    &A(k_1:k_0)=A(k_1:\tau_n) A(\tau_n:k_0)\geq \eta_D A(k_1:\tau_n) \allone_n \allone_n^T\nonumber\\
    &\geq \eta_D e^{-M(\gamma) \Delta} \allone_n \allone_n^T\geq \eta_D e^{-M(\gamma)\Delta}I_n.
\end{align}
To summarize, in both Case 1 and Case 2, we have ${A(k_1:k_0)\geq \eta_F I_n}$ where
$
    \eta_F:= \left(\eta_{\min}^n\cdot\frac{\eta_{\min}^2}{2}\cdot \min\left\{\frac{1}{n^2}, \frac{\eta_{\min}}{2} \right\} \right)^n\eta_{\min}^n e^{-M(\gamma)\Delta}>0.
$
Since $\eta_F$ is uniquely determined by $\gamma,p_0,\beta$ and $\Delta$, it follows that we can define the function $\eta_n:(0,1)\times (0,1)\times (0,\infty)\times[0,\infty)\rightarrow (0,1)$ by the relation $\eta_n(\gamma,p_0,\beta,\Delta)=\eta_F$ while ensuring that 
$
    A(k_1:k_0)\geq \eta_n(\gamma,p_0,\beta,\Delta) I_n
$
 for all $0\leq k_0\leq k_1<\infty$ whenever $\{A(k)\}_{k=0}^\infty$ satisfies the required properties. {Finally, $\eta_n$ is a continuous function, because $\eta_{\min}$, which is determined by $\{\eta_r:r\in[n-1]\}$, is continuous by virtue of our inductive hypothesis.} Thus, the assertion of the proposition holds for $n=q+1$ and hence, for all $n\in\N$.
\end{proof}

\section{Proof of Proposition~\ref{prop:deffuant_weisbuch}}\label{app:C}

\begin{proof}
{To apply our main results, we first need to construct a stochastic chain that captures the given dynamics and then show that the constructed chain is both strongly aperiodic and approximately reciprocal. To this end, 
note that~\eqref{eq:deffuant} can be written as ${x_S(k+1) = A_{2\times2}(k)x_S(k)}$, where we define $S=S(k):=\{\ell(k), m(k) \}$ and}
{\begin{align}\label{eq:auxiliary_deffuant}
    A_{2\times2}(k):= 
    \begin{cases}
        I_2\qquad \quad \quad \text{if}\quad  |x_{\ell}(k) - x_{m(k)}| > r_{\ell(k)\, m(k)},\\
        \begin{pmatrix}
            \alpha(k) & 1- \alpha(k)\\ 
            1-\beta(k) &  \beta(k)
        \end{pmatrix}  \quad \quad\quad\,\,\,\,\,\,\text{otherwise}.
    \end{cases}
\end{align}}
{otherwise. Since no agent other than $i$ and $j$ updates her opinion at time $k$, we also have $x_{\bar S}(k+1) = I_{n-2} x_{\bar S}(k)$, where ${\bar S = \bar S(k):=[n]\setminus S(k)}$. In light of this, the above implies that $x(k+1) = A (k) x(k)$
where $A(k)\in \R^{n\times n}$ is  defined by the following conditions on its sub-matrices: ${A_{S}(k)= A_{2\times2}(k)}$,  $A_{\bar S}(k) = I_{n-2}$, $A_{S\bar S}=O_{2\times (n-2)}$ and $A_{\bar S S} = O_{(n-2)\times 2}$.}

Now,~\eqref{eq:auxiliary_deffuant} and $x(k+1) = A (k) x(k)$ together imply   
$
     a_{ii}(k) \ge \min\{1,\alpha(k),\beta(k)\}\ge \delta
$ \textit{a.s.} for all $i\in [n]$ and $k\in\N_0$. Hence, $\{A(k)\}_{k=0}^\infty$ is strongly aperiodic almost surely.

{To establish approximate reciprocity, consider any two times $k_1,k_2\in\N_0$ with $k_1<k_2$, and consider the following cases.}

{Case 1: $k_2-k_1\le B$. In this case, we have  $$\sum_{k=k_1}^{k_2-1} \allone_{|T|} ^T A_{T\bar T}(k) \allone_{|\bar T|} \le \sum_{k=k_1}^{k_2-1} \allone_{n} ^T A(k) \allone_{n} = n(k_2-k_1)\le nB.$$
for all $T\subset[n]$, i.e.,~\eqref{eq:crucial} holds for $\beta = nB$ and all $p_0\in(0,1)$.} 

{Case 2: $k_2 - k_1> B$. In this case, let $K:=\{k_1,\ldots, k_2\}$ and let $I\subset K$ be as defined in the proposition. We then have
\begin{align}\label{eq:nB_two}
    \sum_{k\in K\setminus I} \allone_{|T|} ^T A_{T\bar T}(k) \allone_{|\bar T|} &\le \sum_{k\in K\setminus I} \allone_{n} ^T A(k) \allone_{n}\cr
    &= n |K\setminus I|\stackrel{(a)}\le nB,
\end{align}
for all $T\subset [n]$, where $(a)$ holds because it is given that $|I|\ge k_2-k_1-B= |K|-B$. On the other hand, for $k\in I$, we bound $\sum_{k\in I} \allone_{|T|} ^T A_{T\bar T}(k) \allone_{|\bar T|}$ for each $T\subset [n]$ as follows: we first define  $I_{\text{ess}}(T)$ as the set of all times $k\in I$ at which (a) opinion updates take place, and (b) exactly one of $\ell(k)$ or $m(k)$ belonging to $T$. That is, $I_{\text{ess}}(T):= I_1(T)\cap I_2(T)$, where $I_{1}(T):=\{ k\in I: |x_{\ell}(k) - x_{m(k)}| \le r_{\ell(k)\, m(k)}\}$ and $I_2(T):=\{k\in I: \ell(k)\in \bar T, m(k) \in  T\} \cup \{k\in I: \ell(k)\in  T, m(k) \in  \bar T\}$.
We then observe that for all $k\notin \Iess(T)$, we  either have $T\subset \bar S(k)$, $\bar T\subset  \bar S(k)$, or $A_{2\times 2}(k)=I_2$, in each of which cases it follows from the definition of $A(k)$ that $\allone_{|T|} ^T A_{T\bar T}(k) \allone_{|\bar T|}  = 0$. Consequently,
\begin{align}\label{eq:epsilon}
    \sum_{k\in I} \allone_{|T|} ^T A_{T\bar T}(k) \allone_{|\bar T|} &= \sum_{k\in  \Iess(T)} \allone_{|T|} ^T A_{T\bar T}(k) \allone_{|\bar T|}\cr
    &\stackrel{(a)}\ge \sum_{k\in \Iess(T)} \min\{1-\alpha(k),1-\beta(k)\} \cr
    &\ge \sum_{k\in \Iess(T)} \varepsilon \,\,=  \varepsilon |\Iess(T)|
\end{align}
\textit{a.s.}, where $(a)$ holds because $A(k)$ has exactly two  non-zero off-diagonal entries and the definition of $\Iess(T)$ implies that exactly one of these is in $A_{T\bar T}(k)$. Similarly, we can show that $\sum_{k\in I} \allone_{|T|} ^T A_{T\bar T}(k) \allone_{|\bar T|} \le (1-\delta)|\Iess(T)|$ holds \textit{a.s.}}
{Since $T\subset[n]$ is arbitrary, combining this with~\eqref{eq:epsilon} yields
\begin{align}\label{eq:almost_end}
    \left(\frac{\varepsilon}{1-\delta}\right) \sum_{k\in I}\allone_{|T|} ^T A_{T\bar T}(k) \allone_{|\bar T|} \le \sum_{k\in I} \allone_{|\bar T|} ^T A_{\bar T T}(k) \allone_{|T|}.
\end{align}
Multiplying both sides of~\eqref{eq:nB_two} by $\frac{\varepsilon}{1-\delta}$ and combining the result with~\eqref{eq:almost_end} culminates in
$\varepsilon(1-\delta)^{-1} \sum_{k\in K}\allone_{|T|} ^T A_{T\bar T}(k) \allone_{|\bar T|} \le \sum_{k\in K} \allone_{|\bar T|} ^T A_{\bar T T}(k) \allone_{|T|} +  \varepsilon(1-\delta)^{-1}nB$ \textit{a.s.}, which shows that~\eqref{eq:crucial} holds with $p_0=\frac{\varepsilon}{1-\delta}$ and $\beta=\frac{\varepsilon nB}{1-\delta}$.} 

{We have thus shown that almost every realization of $\{A(k)\}_{k=0}^\infty$ is both strongly aperiodic and approximately reciprocal.} It now follows from Remark~\ref{remark:new_remark}  that almost every realization of ${\lim_{k\to\infty} A(k:0)}$ exists. Consequently, ${\lim_{k\to\infty}x(k)=\lim_{k\to\infty} A(k:0)x(0)}$ exists \textit{a.s.} for all initial conditions $(k_0,x(k_0))\in\N_0\times \R^n$.
\end{proof}

\section{Proof of Inequality~\eqref{eq:extra}}\label{app:nine}
\begin{proof}
     {Let $\tau_{ij}:=|\{k\in\{k_0,k_0+1,\ldots, k_1-1\}:a_{ij}(k)>0\}|$ denote the number of times sensor $j\in [n]$ transmits to sensor $i\in [n]\setminus\{j\}$ during the given time period.  We can express $\tau_{ji}$ as $\tau_{ji}=qT+r$ for some $q\in\N_0$ and $r\in \{0,1,\ldots, T-1\}$. The definition of $T$ then implies that $q = \lfloor \tau_{ji}/T \rfloor \le \tau_{ij}$. Equivalently, $\tau_{ji}\le T\tau_{ij}+r$. Since $r\le T-1$, this further implies $\tau_{ji}\le T\tau_{ij}+(T-1)$, which is in turn equivalent to $\tau_{ij}\ge T^{-1}\tau_{ji} - (T-1)T^{-1}$. This observation enables us to  derive the following chain of inequalities:}
    {
\begin{align} \label{eq:soon}&\sum_{k=k_0}^{k_1-1} \allone_{|\bar S|} ^T A_{\bar S S}(k)\allone_{|S|} =\sum_{i\in \bar S, j\in S} \sum_{k=k_0}^{k_1-1} a_{ij}(k) 1_{\{ a_{ij}(k)>0\} } \cr &\ge \sum_{i\in \bar S, j\in S} \sum_{k=k_0}^{k_1-1} \delta 1_{\{ a_{ij}(k)>0\} } = \sum_{i\in \bar S, j\in S} \delta  \tau_{ij}\cr &\stackrel{(a)}\ge \delta T^{-1} \sum_{i\in \bar S, j\in S} \tau_{ji} - (T-1)T^{-1}\delta\sum_{i\in \bar S,j\in S} 1\cr &=\delta T^{-1}\sum_{i\in \bar S, j\in S} \sum_{k=k_0}^{k_1-1} 1_{\{ a_{ji}(k)>0\} } - (T-1)T^{-1}\delta\sum_{i\in \bar S,j\in S} 1\cr &\stackrel{(b)}\ge \delta T^{-1}\sum_{i\in \bar S, j\in S} \sum_{k=k_0}^{k_1-1} a_{ji}(k) 1_{\{ a_{ji}(k)>0\} } - (T-1)T^{-1}\delta n^2\nonumber\\ &=\delta T^{-1}\sum_{k=k_0}^{k_1-1} \allone^T_{|S|} A_{S\bar S}(k)\allone_{|\bar S|} - (T-1)T^{-1}\delta n^2, \end{align} where   $1_{\{ a_{ij}(k)>0\} }\in\{0,1\}$ denotes an indicator variable that equals 1 if and only if $a_{ij}(k)>0$, $(a)$ follows from the observation ${\tau_{ij}\ge T^{-1}\tau_{ji} - (T-1)T^{-1}}$, and $(b)$ follows from the facts that $1\ge a_{ji}(k)$ and $\sum_{i\in \bar S, j\in S} 1 = |S||\bar S|\le n^2$. Finally, we note that~\eqref{eq:soon} is equivalent to~\eqref{eq:extra}, as required.}
\end{proof}

\end{document}